\crefname{lemma}{Lemma}{Lemmas}
\crefname{theorem}{Theorem}{Theorems}
\crefname{claim}{Claim}{Claims}
\newenvironment{algorithm-hbox}{\hbadness=10000\begin{algorithm}}{\end{algorithm}}
\newlist{exoenum}{enumerate}{3}
\setlist[exoenum,1]{label=\arabic*)}
\setlist[exoenum,2]{label=\alph*)}
\setlist[exoenum,3]{label=\roman*)}
\newcommand{\bN}{\mathbb{N}}
\renewcommand\geq{\geqslant}
\renewcommand\leq{\leqslant}
\newcommand{\be}[1]{Bad Event~#1}
\newtheorem{theorem}{Theorem}
\newtheorem{conjecture}[theorem]{Conjecture}
\newtheorem{lemma}[theorem]{Lemma}
\theoremstyle{remark}
\let\old@setaddresses\@setaddresses
\def\@setaddresses{\bigskip\bgroup\parindent 0pt\let\scshape\relax\old@setaddresses\egroup}
\title{Progress on the adjacent vertex distinguishing edge colouring conjecture}
\begin{document}

\author[G.~Joret]{Gwena\"{e}l Joret}
\address[G.~Joret]{Computer Science Department \\
  Universit\'e Libre de Bruxelles\\
  Brussels\\
  Belgium}
\email{gjoret@ulb.ac.be}

\author[W.~Lochet]{William Lochet}
\address[W.~Lochet]{ Universit\'e C\^ote d'Azur \\ CNRS \\ I3S \\ INRIA \\ Sophia Antipolis and ENS Lyon \\ LIP \\ Lyon \\ France}
\email{william.lochet@gmail.com}

\thanks{G.\ Joret is supported by an ARC grant from the Wallonia-Brussels Federation of Belgium.}

\date{\today}

\begin{abstract} 
A proper edge colouring of a graph is {\it adjacent vertex distinguishing} if no two adjacent vertices see the same set of colours. 
Using a clever application of the Local Lemma, Hatami (2005) proved that every graph with maximum degree $\Delta$ and no isolated edge has an adjacent vertex distinguishing edge colouring with $\Delta + 300$ colours, provided $\Delta$ is large enough. 
We show that this bound can be reduced to $\Delta + 19$. 
This is motivated by the conjecture of Zhang, Liu, and Wang (2002) that $\Delta + 2$ colours are enough for $\Delta \geq 3$.   
\end{abstract}

\maketitle

\section{Introduction}

We use the notation $[n] := \{1, 2, \dots, n\}$. 
By `graph' we mean a finite, undirected, and loopless graph. 
Graphs are assumed to have no parallel edges, unless otherwise stated.   
We generally follow the terminology of Diestel~\cite{D10} for graphs, and refer the reader to this textbook for undefined terms and notations. 
Given a graph $G$, we let $N_G(u)$ denote the neighbourhood of vertex $u$ in $G$, and let $d_G(u)$ denote the degree of $u$.  
We omit the subscript $G$ when the graph is clear from the context. 

In this paper, a \textit{colouring} of a graph $G$ always means an edge colouring of $G$, defined as a mapping $c:E(G) \to \bN$ associating  integers (colours) to the edges of $G$. 
A colouring is \textit{proper} if no two adjacent vertices receive the same colour. 
Given a proper colouring $c$ of $G$ and a vertex $u\in V(G)$, we let $S_c(u)$ denote the set of colours appearing on edges incident to $u$. 
A proper colouring $c$ is {\it adjacent vertex distinguishing} (an \textit{AVD-colouring} for short) if $S_c(u) \neq S_c(v)$ for every edge $uv\in E(G)$. 
Note that this is a constraint only for edges $uv$ with $d_G(u) = d_G(v)$ since it is automatically satisfied when $d_G(u)\neq d_G(v)$. 
Zhang, Liu, and Wang~\cite{ZLW02} made the following conjecture on the numbers of colours needed in an AVD-colouring. 

\begin{conjecture}[\cite{ZLW02}]
\label{conj}
Every graph with maximum degree $\Delta\geq 3$ and no isolated edge has an AVD-colouring with $\Delta + 2$ colours.
\end{conjecture}

(The condition $\Delta\geq 3$ is required because $C_5$ needs five colours, and it is the only connected graph known to require more than $\Delta +2 $ colours.)
This conjecture captured the attention of several researchers over the years.  
It is known to be true e.g.\ if $G$ is bipartite~\cite{BGLS07}; 
if $\Delta=3$~\cite{BGLS07}; 
if $G$ is planar and $\Delta\geq 12$~\cite{BBH13,HHW14}; 
if $G$ is $2$-degenerate~\cite{WCLM16}. 
The conjecture is also known to hold asymptotically almost surely for random 4-regular graphs~\cite{GR06}. 
Using a clever application of the Local Lemma, Hatami~\cite{H05} showed that the conjecture holds up to an additive constant: He proved that every graph with maximum degree $\Delta > 10^{20}$ and no isolated edge has an AVD-colouring with at most $\Delta+300$ colours.  

In this paper, we show that the $\Delta+300$ bound of Hatami~\cite{H05} can be reduced to $\Delta+19$ (for large enough $\Delta$): 

\begin{theorem}
\label{thm:main}
There exists an integer $\Delta_0 \geq 0$ such that every graph $G$ with maximum degree $\Delta \geq \Delta_0$ and no isolated edge has an AVD-colouring with at most $\Delta+19$ colours. 
\end{theorem}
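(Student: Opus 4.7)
The plan is to apply the Lov\'asz Local Lemma (LLL) to a controlled random proper edge colouring of $G$ with $\Delta+19$ colours, refining Hatami's analysis in order to reduce the additive slack from $300$ to $19$. Concretely, I would generate a random proper $(\Delta+19)$-colouring $c$ of $G$ via some procedure---for instance, a randomized nibble scheme or a random completion of a structured partial colouring---designed so that, conditional on the colouring far from any edge $uv$, the colour pattern near $uv$ is close to uniform among proper completions. For each edge $uv \in E(G)$, introduce the bad event $A_{uv} := \{S_c(u) = S_c(v)\}$. Since $c$ is proper, $|S_c(w)| = d(w)$ for every vertex $w$, so $A_{uv}$ can occur only when $d(u) = d(v)$, and the analysis may be restricted to edges whose endpoints have equal degree.

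The heart of the argument is the probability estimate $\Pr[A_{uv}] = \bigo{\Delta^{-t}}$ for such an edge $uv$ with $d(u) = d(v) = d$, where $t$ is large enough to beat the polynomial dependency degree of the LLL. In this case $S_c(u)$ and $S_c(v)$ each omit $\Delta+19-d \ge 19$ colours from $[\Delta+19]$, and $A_{uv}$ forces the two ``missing palettes'' to coincide exactly. Summing over the possible common missing palette and exploiting the near-uniformity of the local random colouring, one obtains a bound whose exponent $t$ grows with the slack; with a slack of $19$ this exponent is comfortably larger than what the dependency structure requires. Indeed, each event $A_{uv}$ is determined by the colours of edges within distance $2$ of $uv$ in the line graph, giving dependency degree polynomial in $\Delta$, and the symmetric LLL then yields, with positive probability, a colouring avoiding every bad event.

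A small amount of extra care is needed for edges incident to low-degree vertices: such edges have a larger slack and so are ``easier'' from the coincidence viewpoint, but the random procedure may behave less cleanly there. One can either adapt the procedure, or deterministically AVD-colour a small ``low-degree kernel'' in a first pass and then apply the random step only to the remaining subgraph of high minimum degree, feeding the leftover colours back together at the boundary. The main obstacle, however, is the probability bound above: Hatami's original analysis yields decay sufficient for a slack of $300$, while pushing $t$ down to what is needed for a slack of $19$ demands a substantially tighter estimate of the correlated colour patterns at $u$ and $v$---most likely via a coupling argument, a concentration inequality for the number of colours absent at both endpoints, or a permanent/matching-style count of valid local completions around $\{u,v\}$. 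I expect this tightened probability bound, together with the combinatorial setup that allows it to be invoked uniformly over all balanced edges, to constitute the bulk of the technical work.
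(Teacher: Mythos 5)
Your proposal is essentially a sketch of Hatami's original LLL strategy, not a proof, and it does not follow the route the paper takes; moreover it has a genuine gap at its core.

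The central issue is that you never supply the probability estimate you identify as the heart of the argument. You write that you ``expect this tightened probability bound \dots{} to constitute the bulk of the technical work,'' which concedes the point: what is left unproved is exactly the content of the theorem. Worse, the random object you propose to analyse --- a ``uniformly random proper $(\Delta+19)$-colouring,'' or a nibble producing something ``close to uniform'' --- is a very difficult distribution to reason about. Hatami does not analyse such an object; he starts from a deterministic Vizing colouring of an auxiliary multigraph and randomly perturbs it by recolouring a small, carefully chosen set of edges with \emph{new} colours, precisely because the perturbation is tractable while a globally random proper colouring is not. Your heuristic that the missing palette at a high-degree vertex is ``uniform over $19$-subsets,'' giving $\Pr[A_{uv}] \approx \binom{\Delta+19}{19}^{-1}$, has no justification under any of the procedures you name; proper colourings exhibit strong local correlations, and establishing even a weak version of this near-uniformity is a hard open-ended problem, not a refinement. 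Your remark about low-degree vertices being ``easier'' also runs opposite to the actual difficulty: it is the low-degree edges that are dangerous in this problem (an edge $uv$ with both endpoints of small degree, all of whose other neighbours have large degree, may have no room to be fixed later), which is why the paper devotes separate machinery to them.

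The paper's actual proof is structurally different and this difference is what buys the improved constant. It does \emph{not} run a single LLL on a random colouring. It splits vertices by degree at threshold $(1/2-\epsilon)\Delta$, builds a deterministic Vizing colouring, and then runs an \emph{iterative randomized resetting algorithm} (Algorithm~1) that selects two edges per big vertex to uncolour, handling five explicit bad-event types by resetting nearby selections. Termination is proved by an entropy-compression (Moser--Tard\'os-style) encoding argument: a lossless log of any non-terminating run is compressed into too few bits, forcing termination with high probability. The selected edges are then recoloured with $q+3=16$ new colours via Vizing, a marked matching gets one more colour, and a second, simpler entropy-compression algorithm handles edges inside the low-degree subgraph with the remaining slack. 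Entropy compression replaces the LLL's loss (the factor $e$ and crude union bounds over the dependency neighbourhood) with an exact count of weighted Dyck words / plane trees, and it is this substitution, together with the combinatorial case analysis of bad events, that reduces $300$ to $19$. A tightened one-shot LLL along the lines you sketch is not known to reach anywhere near $19$, and without a concrete probability bound your proposal does not constitute a proof.
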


The main novelty of our approach is the use of the entropy compression method, a proof method introduced by Moser and Tard\'os~\cite{MT10} in their celebrated algorithmic proof of the Local Lemma. 
While this technique was originally developed to obtain efficient algorithms, it soon appeared that in some cases it could produce bounds that are numerically better than those following from the Local Lemma; see~\cite{DJKW16, EP13, GKM13, KM13} for early examples of such results, and~\cite{B17} for a general perspective on the method. 
Thus our main result provides yet another illustration of this phenomenon. 

One could suspect, as we do, that probabilistic methods such as the Local Lemma or entropy compression will probably not be enough on their own to prove the $\Delta+2$ bound from \cref{conj}, assuming the conjecture is true. 
Intuitively, some extra colours are bound to be `wasted' in any random process.  
Nevertheless, we find it remarkable that one can come very close to the conjectured bound using such techniques. 
If anything, we see this as extra supporting evidence for the conjecture.  

Variants and strengthenings of \cref{conj} have been proposed in the literature. 
A natural one is a list version of the problem, where each edge of $G$ has a list of colours it can use.  
Recall that for proper edge colourings, the famous list colouring conjecture states that the minimum list size such that $G$ has a proper edge colouring where each edge receives a colour from its list is equal to its chromatic index; in particular, it is at most $\Delta+1$. 
Kahn~\cite{K96} was the first to establish an upper bound of the form $\Delta + o(\Delta)$ on the required list size, and the best asymptotic bound known to this day is $\Delta + O(\sqrt{\Delta}(\log \Delta)^4)$ due to Molloy and Reed~\cite{MR00}.  
Kwa\'sny and Przyby{\l}o~\cite{KP17} recently strenghened the latter result by showing that the resulting list edge colouring can moreover be guaranteed to be an AVD-colouring.   
Hor\v{n}\'ak and Wo\'zniak~\cite{HW12} conjectured that the list edge colouring conjecture also holds in the AVD setting, and thus in particular that a $\Delta + O(1)$ bound on the list size should be enough. 

Another strengthening of \cref{conj}, due to Flandrin {\it et al.}~\cite{FMPSW13}, states that every graph with no isolated edge and maximum degree $\Delta \geq 3$ has a proper edge colouring $c$ with colours in $[\Delta + 2]$ such that, for every edge $uv$, the sum of colours of edges incident to $u$ and that of $v$ are distinct. 
This is called a {\it neighbour sum distinguishing} colouring. 
The best asympotic bound is due to Przyby{\l}o~\cite{P13,P17}, who obtained an upper bound of $\Delta + O(\Delta^{1/2})$ on the number of required colours in such a colouring. 
It is an intriguing open problem whether a $\Delta + O(1)$ bound holds; neither Hatami's proof~\cite{H05} for AVD-colourings nor ours seem to be adaptable to this setting. 
We refer the reader to~\cite{BP17,P17} and the references therein for further pointers to the relevant literature.  

The paper is organised as follows. 
In Section~\ref{sec:initial_colouring} we set up the plan for the proof of \cref{thm:main}. 
In particular, vertices with big degrees (at least $\Delta/2$, roughly) and those with small degrees are treated independently, and differently. 
Then, Section~\ref{sec:big} and Section~\ref{sec:small} are devoted to handling big and small degree vertices, respectively. 

\section{Initial colouring}
\label{sec:initial_colouring}

Fix some $\epsilon$ with $0 < \epsilon < 1/2$, which will be assumed small enough later in the proof.  
Let $G$ be a graph with maximum degree $\Delta$ and no isolated edge. 
We assume $\Delta$ is large enough for various inequalities appearing in the proof to hold.   

The beginning of our proof of \cref{thm:main} follows closely that of Hatami~\cite{H05}. 
In particular, we reuse his approach of treating differently vertices with `small' degrees and those with `big' degrees, except we use $(1/2 - \epsilon)\Delta$ as the threshold instead of $\Delta/3$ in~\cite{H05}. 
This larger threshold helps a little bit in reducing the additive constant in the main theorem; however, the bulk of the reduction from $300$ to $19$ comes from treating big degree vertices differently.  

Let $d := \lceil (1/2 - \epsilon)\Delta \rceil$. 
Taking $\Delta$ large enough, we may assume that $d < \Delta/2$. 
We begin as in~\cite{H05} by modifying the graph $G$ as follows. 
Let $G'$ be the {\em multigraph} obtained from $G$ by contracting each edge $uv\in E(G)$ such that $d_G(u) < d$ and $d_G(v) < d$ but neither $u$ nor $v$ has any other neighbour $w$ with $d_G(w) < d$. 
Then $G'$ has maximum degree $\Delta$ and maximum edge multiplicity at most $2$. 
Every proper colouring $c'$ of $G'$ can be extended to a proper colouring $c$ of $G$ with the same set of colours as follows:
For each edge $e \in E(G)$ appearing in $G'$, set $c(e) := c'(e)$. 
For each edge $uv \in E(G)$ that was contracted, we know that $d_G(u) + d_G(v) < \Delta$. 
Thus some colour $\alpha$ of $c'$ is not used on any of the edges incident to $u$ and $v$, set then $c(uv) := \alpha$. 

In~\cite{H05}, the author points out that if moreover $c'$ is an AVD-colouring of $G'$ then $c$ is an AVD-colouring of $G$. 
Using this observation, the proof in~\cite{H05} then focuses on finding an AVD-colouring of $G'$. 
This is done by starting with a proper colouring $c'$ with $\Delta+2$ colours, which exists by Vizing's theorem, and then recolouring some edges of $G'$ with new colours to obtain an AVD-colouring of $G'$. 
The advantage of working in $G'$ instead of $G$ is that the subgraph of $G'$ induced by the vertices with degree strictly less than $d$ has no isolated edge, which is important in that proof. 

Say that a vertex $u \in V(G)$ is \textit{small} if $d_{G}(u) < d$, and \textit{big} otherwise. 
Let $A$ and $B$ be the sets of small and big vertices of $G$, respectively.  
In our proof, we follow an approach similar to~\cite{H05} but we keep the focus on $G$:  
We start with a proper colouring $c'$ of $G'$ with $\Delta+2$ colours obtained from Vizing's theorem and extend it to a colouring $c$ of $G$ as in the above remark. 
Thus, the colouring $c$ uses $\Delta+2$ colours and satisfies the following property: 
\begin{equation}
\label{prop:small_edges}
S_c(u) \cap S_c(v) = \{c(uv)\} \quad \forall uv\in E(G) \textrm{ s.t.\ }  
uv \textrm{ is an isolated edge in } G[A]. 
\end{equation}
Then, we modify $c$ to obtain an AVD-colouring of $G$. 
Thus $G'$ is only used to produce the initial colouring $c$ of $G$.  
One advantage of working in $G$ is that we avoid having to deal with parallel edges, which would introduce (trivial but annoying) technicalities in our approach. 
On the other hand, a small price to pay compared to~\cite{H05} is that we will have to watch out for those edges $uv$ that are isolated in $G[A]$ in our proof.  

Our goal is to transform the colouring $c$ into an AVD-colouring of $G$. 
The plan for doing so is roughly as follows. 
First we show that we can uncolour a bounded number of edges per big vertex in such a way that edges $uv$ with $S_c(u)=S_c(v)$ and $u,v\in B$ that remain form a matching satisfying some specific properties.   
Then we show how we can recolour these uncoloured edges, plus a few other edges of $G$, to obtain a colouring where every edge $uv$ with $u,v \in B$ satisfies $S_c(u)\neq S_c(v)$. 
Finally, we recolour edges with both endpoints in $A$ in such a way that the resulting colouring is an AVD-colouring of $G$.

\section{Big vertices}
\label{sec:big}

The goal of this section is to establish the following lemma. 

\begin{lemma}
\label{lem:big}
There exists a proper colouring $c^*$ of $G$ using $\Delta + 19$ colours such that $S_{c^*}(u) \neq S_{c^*}(v)$ holds for every edge $uv \in E(G)$ with $u,v\in B$, and for every edge $uv \in E(G)$ with $u,v\in A$ which is isolated in $G[A]$. 
\end{lemma}

The resulting colouring will then be turned into an AVD-colouring of $G$ in the next section, by recolouring the non-isolated edges of $G[A]$. 
We remark that in \cref{lem:big}, we also consider edges $uv\in E(G)$ that are isolated in the graph $G[A]$, because every edge incident to $u$ or $v$ distinct from $uv$ is incident to a big vertex, and all these edges will have a fixed colour when we are done dealing with big vertices. 
Indeed, in the next section we only recolour edges in the subgraph induced by small vertices.
Thus, if $u$ and $v$ were to see the same set of colours at the end of this step, we would have no way to fix this later.

The bulk of the proof of \cref{lem:big} is done using a randomised algorithm, which we now describe. 

\subsection{A randomised algorithm}

For each vertex $u\in B$, choose an arbitrary subset $N^+(u)$ of $N(u)$ of size $d$. 
Our randomised algorithm is Algorithm~\ref{alg:big}, whose goal is to select a subset $U^+(u) \subseteq N^+(u)$ of size $2$ satisfying certain properties for each vertex $u \in B$. 
For each vertex $v\in V(G)$, we let $U^-(v):=\{u\in B: v\in U^+(u)\}$. 
Algorithm~\ref{alg:big} chooses the subsets $U^+(u)$ iteratively, one big vertex $u$ at a time. 
Hence, we see the sets $U^+(u)$ as variables, and the sets $U^-(v)$ ($v\in V(G)$) as being determined by these variables. 
(For definiteness, we set $U^+(u):=\emptyset$ for every small vertex $u$.) 
Just after choosing the subset $U^+(u)$ of a big vertex $u$, the algorithm checks whether this choice triggered any `bad event'. 
If so, the bad event is handled, which involves {\em resetting} the variable $U^+(u)$, which means setting $U^+(u):=\emptyset$, and possibly resetting other variables $U^+(v)$ for some well-chosen big vertices $v$ close to $u$ in $G$. 

Thanks to these bad events, the selected subsets satisfy a number of properties. 
A key property is that $|U^-(v)| \leq q$ for every $v\in V(G)$, with $q:= 13$ being the constant that is optimised in this proof.

At any time during the execution of the algorithm, we say that an edge $uv \in E(G)$ is \textit{selected} if $v\in U^+(u)$ or $u\in U^+(v)$. 
In the algorithm, we will make sure that if $v\in U^+(u)$ then $u\notin U^+(v)$ (that is, an edge can be selected `at most once'). 

After the algorithm terminates, selected edges will be used to fix locally the colouring $c$ obtained in Section~\ref{sec:initial_colouring} for big vertices: 
The plan is to recolour them using $q+3$ new colours, and then recolour a well-chosen matching of $G$ with yet another new colour, in such a way that at the end $S_c(u) \neq S_c(v)$ holds for all edges $uv\in E(G)$ with $u,v\in B$. 
The resulting colouring of $G$ will use $\Delta+q+6 = \Delta + 19$ colours in total. 
 
Let us explain the conventions and terminology used in Algorithm~\ref{alg:big}. 
First, we assume that the vertices of $G$ are ordered according to some fixed arbitrary ordering.  
This naturally induces an ordering of each subset of $V(G)$ as well, of each set of pairs of vertices (say using lexicographic ordering), and more generally of any set of structures built using vertices of $G$. 
This is used implicitly in what follows.  
 
We say that an edge $uv$ linking two big vertices is \textit{finished} if $U^+(w) \neq \emptyset$ for each big vertex $w$ in $N(u)\cup N(v)$ (note that this set includes $u$ and $v$). 
For $u\in V(G)$, define $S'(u)$ as the set $S_c(u)$ minus colours of edges incident to $u$ that are selected. 
That is,  
$$
S'(u):= \{c(uv): v \in N(u), v \notin U^+(u) \cup U^-(u) \}.  
$$

Call an edge $uv\in E(G)$ \textit{bad} if $u,v\in B$, $uv$ is finished, $d_G(u)=d_G(v)$, and $S'(u)=S'(v)$. 
In the algorithm, we check whether $S'(u)=S'(v)$ for two big vertices $u,v$ with $d_G(u)=d_G(v)$ only when $uv$ is finished, the idea being that if there are still big vertices adjacent to $u$ or $v$ waiting to be treated then this could potentially impact the sets $S'(u)$ and $S'(v)$.  
This motivates the notion of bad edges. 
It will not be possible to avoid bad edges entirely, however the algorithm will maintain a good control on them. 

Call an edge $uv\in E(G)$ \textit{fragile} if $uv$ is an isolated edge in $G[A]$ and $d_G(u)=d_G(v) \leq q+3$. 
The reason for introducing this notion is the following. 
Suppose $uv$ is an isolated edge in $G[A]$ with $d_G(u) = d_G(v)$.  
Observe that at the beginning $u$ and $v$ see disjoint sets of colours in the colouring $c$ except for colour $c(uv)$. 
Once the algorithm terminates, we will recolour at most $q+2$ edges incident to each big vertex $w$ (with new colours). 
Thus, if $d_G(u) = d_G(v) \geq q+4$, we know that there will be at least one edge $e$ incident to $u$ distinct from $uv$ that kept its original colour $c(e)$. 
Since $v$ does not see the colour $c(e)$, we are then assured that $u$ and $v$ see different sets of colours after the recolouring step.  
Therefore, it is only when $uv$ is fragile that we need to be careful when selecting edges incident to $u$ or $v$ to recolour. 
Fragile edges will be carefully handled in the algorithm. 

Given a big vertex $u$ with $U^+(u)=\emptyset$, an unordered pair $\{v,w\} \subseteq N^+(u)$ is \textit{admissible} for $u$ if the following three conditions are satisfied: 
\begin{itemize}
\item $v, w \notin U^-(u)$; 
\item if $vw \in E(G)$ then $vw$ is not fragile, and 
\item setting $U^+(u) := \{v,w\}$ does not create any bad edge incident to $u$. 
\end{itemize}
The algorithm considers the big vertices $u$ with $U^+(u)=\emptyset$ that remain one by one, selects the subset $U^+(u)$ randomly each time, and deals with any bad event that may occur. 
The random choice for $U^+(u)$ is made by choosing an admissible pair for $u$ uniformly at random among the first $s:={d-q \choose 2} - 3d$ admissible pairs. 
\cref{lem:invariant} below shows that there are always at least $s$ such pairs, thus this random choice can always be made.  

Five types of bad events are considered in the algorithm. 
They correspond to the five conditions tested by the {\tt if} / {\tt else if} statements; we refer to them as \be{1}, \be{2}, etc.\ in order. 
These events state the existence of certain structures in the graph. 
We remark that there could be more than one instance of the structure under consideration in the graph.  
(For instance, there could be two vertices $v\in N(u)$ with $|U^-(v)|=q+1$ in \be{1}.) 
In this case, we assume that the algorithm chooses one according to some deterministic rule. 
For the convenience of the reader, the five types of bad events considered are illustrated in Figure~\ref{fig:bad_events}. 
Let us emphasise that if any bad event is triggered, then the current vertex $u$ is always reset (i.e.\ the algorithm sets $U^+(u):=\emptyset$). 
This will ensures that no other bad event remains in the graph after dealing with the bad event under consideration. 

\begin{figure}
\centering
\includegraphics{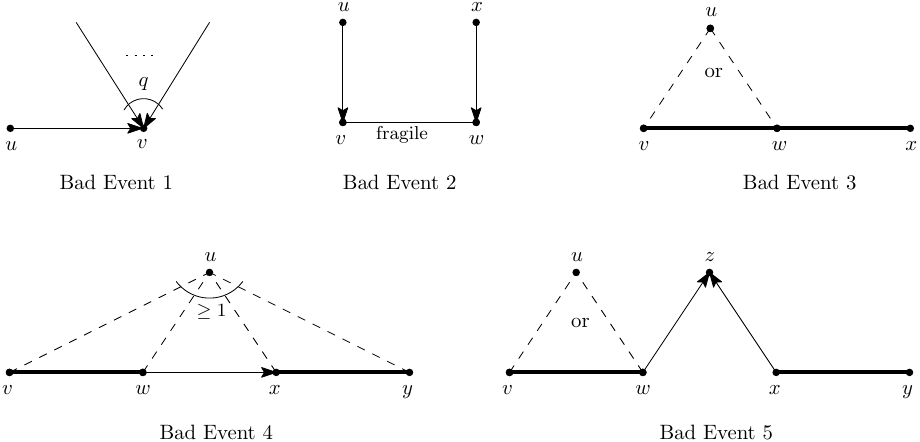}
\caption{The five types of bad events in Algorithm~\ref{alg:big}. 
Bad edges are drawn in bold. 
(Note that possibly $z=u$ in \be{5}.) 
\label{fig:bad_events}}
\end{figure}

\begin{algorithm-hbox}
\caption{Uncolouring some edges incident to big vertices.}\label{alg:big}
\SetAlgoLined 
$U^+(u) \gets \emptyset \quad \forall u\in B$ \;
\While{$\exists v\in B$ with $U^+(v)= \emptyset$}{
$u \gets$ first such vertex \; 
$U^+(u) \gets$ admissible pair chosen uniformly at random among first $s$ ones \;
\If{$\exists v \in N(u)$ with $|U^-(v)|=q+1$}{
   $U^+(w) \gets \emptyset \quad \forall w \in U^-(v)$ \;
}
\ElseIf{$\exists$ fragile edge $vw$ and $x\in V(G)-\{u,v,w\}$ such that $u \in U^-(v)$ and $x\in U^-(w)$}{
 $U^+(a) \gets \emptyset \quad \forall a \in \{u,x\}$ \;
}
\ElseIf{$\exists$ distinct bad edges $vw, wx$ with $u\in N(v) \cup N(w)$}{
 $U^+(a) \gets \emptyset \quad \forall a \in \{u,v,x\}$ \;
}  
\ElseIf{$\exists$ two independent bad edges $vw, xy$ with $u\in N(v) \cup N(w) \cup N(x) \cup N(y)$ and $x \in U^+(w)$}{
 $U^+(a) \gets \emptyset \quad \forall a \in \{u,v,w,y\}$ \;
}  
\ElseIf{$\exists$ two independent bad edges $vw, xy$ with $u\in N(v) \cup N(w)$, and $\exists z \in V(G) - \{v,w,x,y\}$ with $w, x \in U^-(z)$}{
 $U^+(a) \gets \emptyset \quad \forall a \in \{u,v,w,x,y\}$ \;
}  
}
\end{algorithm-hbox}

The rest of the proof of \cref{lem:big} consists in showing that (1) if the algorithm stops, then one can find a colouring $c^*$ satisfying the lemma, and (2) the algorithm stops with nonzero probability. 
The second step is done using the entropy compression method, a proof method introduced by Moser and Tard\'os~\cite{MT10} in their celebrated algorithmic proof of the Lov\'asz Local Lemma. 
It was pointed out by an anonymous referee that using exactly the same bad events, one could instead apply the Local Cut Lemma of Bernshteyn~\cite{B17}. This application still needs a few of the facts proved below but is overall shorter. The Local Cut Lemma itself admits a short and simple probabilistic proof that does not use the entropy compression method, so this provides a genuinely different proof. 
% \footnote{A small drawback of this approach is that the resulting proof is not algorithmic, while the proof presented in this paper is, in the following sense: A closer look at the analyses of the two randomised algorithms given in this section and the next one show that they run in expected polynomial time. As a result, the resulting AVD-colouring of $G$ can be found in expected polynomial time.}
The reader interested in following this approach will find examples of applications of the Local Cut Lemma in~\cite[Section~3]{B17}. With a little bit of work, they can be adapted to the current setting.

The following lemma establishes some key properties of Algorithm~\ref{alg:big}. 
Note that by an {\it invariant} of the {\tt while} loop, we mean a property that is true every time the condition of the loop is being tested. 
Thus, such a property holds when a new iteration of the loop starts, and also when the loop (and thus the algorithm) stops. 

%$ $   % artificial space to improve a bit the position of Lemma 3 in pdf in v1

\begin{lemma}
\label{lem:invariant}
The following properties are invariants of the {\tt while} loop in Algorithm~\ref{alg:big}: 
\begin{enumerate}
	\item $|U^-(v)| \leq q$ for every $v\in V(G)$. \label{q}
	\item At least one of $U^-(v),U^-(w)$ is empty for each fragile edge $vw$. \label{fragile}
	\item Bad edges form a matching. \label{bad_matching}
    \item If $w, x\in B$ belong to distinct bad edges, then $(\{w\} \cup U^{+}(w)) \cap (\{x\} \cup U^{+}(x)) = \emptyset$.\label{disjoint_balls}
    \item Every $u\in B$ with $U^+(u)=\emptyset$ has at least $s$ admissible pairs.  \label{admissible_pairs}
\end{enumerate}
\end{lemma}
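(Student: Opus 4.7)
The plan is to prove all five invariants simultaneously by induction on the number of completed iterations of the {\tt while} loop. The base case is immediate: before any iteration every $U^+(u)$ is empty, so invariants 1--4 hold vacuously, and invariant 5 reduces to $\binom{d}{2} \geq s$, which holds for $\Delta$ large. For the inductive step I will assume the five invariants hold at the start of an iteration, pick the first unprocessed $u \in B$, and set $U^+(u) := \{v,w\}$ to be the random admissible pair, which is well defined by the inductive hypothesis of invariant 5.

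The pivotal observation is that admissibility already kills every new violation ``at $u$'': no bad edge incident to $u$ is created, no fragile edge is selected, and neither $v$ nor $w$ lay in $U^-(u)$. Hence any surviving invariant violation must match one of the five structured configurations targeted by BE1--BE5. For invariants 1--4 the proof then reduces to a branch-by-branch check: each {\tt if}/{\tt else if} test corresponds to the configuration that would otherwise break an invariant (BE1 $\to$ invariant 1, BE2 $\to$ invariant 2, BE3 $\to$ invariant 3, BE4 and BE5 $\to$ invariant 4), and the prescribed reset --- always including $u$ --- simultaneously destroys that configuration and leaves the remaining invariants intact, because resets only shrink $U^-$ sets and delete bad edges. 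I would also verify that no further bad event springs up after the handling: this follows from the same observation, since any surviving violation necessarily had $u$ as an endpoint, and $u$ has just been reset.

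The main work, and the main obstacle, is invariant 5: a reset can demote a previously processed vertex to unprocessed, and in the resulting state every unprocessed vertex must still have at least $s$ admissible pairs. I will fix an arbitrary unprocessed $u' \in B$ and count non-admissible pairs in $N^+(u')$ by which admissibility clause they fail. Clause 1 (intersection with $U^-(u')$) forbids at most $\binom{d}{2} - \binom{d-q}{2}$ pairs using invariant 1. Clause 2 (fragile edges) forbids $O(d)$ pairs, because every vertex lies in at most one fragile edge. Clause 3 is the delicate one: for each neighbour $y$ of $u'$ with $d_G(y) = d_G(u')$, the constraint $S'(u') = S'(y)$, combined with invariants 1 and 3 which pin $U^-(y)$ and the nearby bad edges down to bounded size, forces the pair of colours $\{c(u'v'),c(u'w')\}$ into a bounded collection of targets; summed over all such $y$ the total is at most $3d$. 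Combining the three bounds leaves at least $\binom{d-q}{2} - 3d = s$ admissible pairs, which is precisely the threshold the algorithm uses, so the induction closes. The main subtlety I anticipate is in this third bound: candidate neighbours $y$ whose own $S'(y)$ is not yet frozen, and edges satisfying property~\eqref{prop:small_edges}, both require extra care to confirm that a truly bounded number of pairs is eliminated per $y$.
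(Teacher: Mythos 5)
Your high-level strategy — induction on completed iterations of the {\tt while} loop, with admissibility ruling out newly-bad edges incident to $u$ and the bad-event resets restoring invariants 1--4 — is essentially the paper's proof. Two remarks on invariants 1--4 that are worth tightening: the correspondence is not the strict one ``BE$i$ $\to$ invariant $i$'' you describe, since (for instance) a configuration that would break invariant~3 might only trigger BE1 or BE2 because the branches are tested in order; what the paper actually shows is that \emph{some} bad event is always triggered, which suffices because resetting $u$ renders the offending edge (which lies at distance exactly one from $u$) unfinished and hence not bad. Similarly, ``resets only shrink $U^-$ sets and delete bad edges'' is the right intuition, but the reason it is safe is that $S'(x)$ depends only on $U^+$ restricted to $N(x)\cup\{x\}$, so resetting a vertex $w$ can change $S'$ only at vertices within distance one of $w$ — and all edges there are unfinished after the reset.

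The genuine gap is in invariant 5. You argue that for each neighbour $y$ of $u'$ with $d_G(y)=d_G(u')$, the constraint $S'(u')=S'(y)$ forces the chosen pair into a ``bounded collection of targets,'' and you attribute this to invariants 1 and 3 pinning down $U^-(y)$ and nearby bad edges. That is not the mechanism, and a bound of the form ``at most $C$ pairs per $y$'' with $C\geq 2$ would blow the budget: with $\approx\Delta\approx 2d$ candidate neighbours, $2\Delta+\lfloor d/2\rfloor>3d$. What the argument needs — and what the paper establishes — is \emph{uniqueness}: $S'(y)$ is already fixed, while $S'(u')$ is the fixed set $\{c(u'y'):y'\in N(u')\setminus U^-(u')\}$ with the two colours $c(u'v'),c(u'w')$ removed; for equality to hold, the removed pair of colours must be exactly $\{c(u'y'):y'\notin U^-(u')\}\setminus S'(y)$, and since $c$ is proper (injective on edges at $u'$) this determines $\{v',w'\}$. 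So each $y$ forbids \emph{at most one} pair, giving at most $|N(u')|\leq\Delta$ forbidden pairs from bad-edge avoidance. Your arithmetic also double-counts: the $3d$ is not a budget for the bad-edge clause alone with a separate $O(d)$ for fragile pairs on top — it is precisely $\Delta+\lfloor d/2\rfloor\leq 3d$ for $\epsilon$ small, after which $\binom{d-q}{2}-3d=s$ closes the induction.
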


\begin{proof}
Let us start with property~\eqref{q}. 
Clearly, $|U^-(v)| \leq q$ for every $v\in V(G)$ the first time the condition of the {\tt while} loop is being tested. 
This remains true for every subsequent test of the condition, thanks to \be{1}: 
Selecting the subset $U^+(u)$ for a vertex $u\in B$ could create up to two vertices $v$ with $|U^-(v)| = q+1$ but these get fixed immediately when $u$ is reset. 
Hence, \eqref{q} is an invariant of the loop.   

Similarly, it is clear that property \eqref{fragile} is an invariant of the {\tt while} loop, thanks to \be{2}. 

Let us consider property~\eqref{bad_matching}. 
The property is true at the beginning of the algorithm, since there are no bad edges. 
Next, suppose that property~\eqref{bad_matching} held true at the beginning of the loop but that there are two incident bad edges $e, f$ just after selecting the admissible pair $U^+(u)$ for a big vertex $u$. 
Then at least one of $e,f$, say $e$, became bad just after treating $u$. 
Note that $e$ cannot be incident to $u$, by definition of admissible pairs. 
Thus $e$ is at distance $1$ from $u$. 
It suffices to show that some bad event is triggered, since then $u$ is reset and $e$ is no longer bad (since $e$ is not finished). 
This is true, because if neither \be{1} nor \be{2} is triggered, then \be{3} is triggered because of the existence of the pair $e,f$. 
Thus we see that property~\eqref{bad_matching} is an invariant of the loop. 

The proof for property~\eqref{disjoint_balls} is similar. 
The property clearly holds at the beginning of the algorithm. 
Next, suppose that it held true at the beginning of the loop but that just after selecting the subset $U^+(u)$ for a big vertex $u$, there are two independent bad edges $vw, xy$ such that $\{w\} \cup U^{+}(w)$ and $\{x\} \cup U^{+}(x)$ intersect. 
Then at least one of the two edges, say $vw$, is at distance exactly $1$ from $u$. 
(Recall that $u$ cannot be incident to either of the two edges, by definition of admissible pairs.) 
As before, it suffices to show that some bad event occurs, since then $u$ is reset and $vw$ is no longer bad. 
Let $z\in (\{w\} \cup U^{+}(w)) \cap (\{x\} \cup U^{+}(x))$ and say none of the first four bad events happens. 
Then $z \notin \{v,w,x,y\}$, since otherwise \be{4} would have been triggered. 
But this shows that \be{5} occurs. 
We deduce that property~\eqref{disjoint_balls} is maintained. 

Finally, it remains to show that property~\eqref{admissible_pairs} is an invariant of the loop. 
Consider thus any vertex $u\in B$ with $U^+(u)=\emptyset$ when the condition of the loop is being tested.  
(Thus, a new iteration of the loop is about to start.) 
From invariant~\eqref{q}, we know that there are at least $d-q \choose 2$ unordered pairs of distinct vertices in $N^+(u) - U^-(u)$. 
Next, a key observation is that for every $x \in N(u)$, if there exists $\{v,w\} \subseteq N^+(u) - U^-(u)$ such that setting $U^+(u):=\{v,w\}$ makes the edge $ux$ bad, then the set $\{v,w\}$ is uniquely determined. 
Hence, potential bad edges forbid at most $|N(u)| \leq \Delta$ pairs of vertices in $N^+(u) - U^-(u)$. 
Finally, among the remaining pairs $\{v,w\}$, at most $\lfloor d/2 \rfloor$ of them are such that $vw$ is a fragile edge. 
Therefore, we conclude that there are at least ${d-q \choose 2} - \Delta - \lfloor d/2 \rfloor \geq {d-q \choose 2} - 3d =s$ admissible pairs for $u$. 
\end{proof}

Our interest in \cref{lem:invariant} lies in the fact that the properties listed hold in particular when Algorithm~\ref{alg:big} stops. 
While it is not clear at first sight that the algorithm should ever stop, we will show in the next subsection that it does with nonzero probability. 
In order to motivate the upcoming analysis of the algorithm, let us show that if it stops, then we can complete the proof of \cref{lem:big}. 

\begin{lemma}
\label{lem:if_stops_then}
If some execution of Algorithm~\ref{alg:big} terminates then one can find a colouring $c^*$ satisfying the requirements of \cref{lem:big}. 
\end{lemma}
\begin{proof}
Consider the resulting function $U^+$ after the algorithm stops, and the corresponding set of selected edges.  
Then $U^+$ satisfies the properties of \cref{lem:invariant}. 
Recall that each vertex of $G$ is incident to at most $q+2$ selected edges, as follows from property~\eqref{q} of \cref{lem:invariant}.  
Using Vizing's theorem we recolour the set of selected edges properly using $q+3$ new colours, say from the set $[\Delta+3, \Delta+q+5]$.  
Let $c'$ denote the resulting edge colouring of $G$. 
That is, $c'(e) := c(e)$ if $e$ was not selected, and $c'(e)$ denotes the new colour of $e$ if $e$ was selected, where $c$ is the colouring from Section~\ref{sec:initial_colouring}. 

For each bad edge $uv$, choose one of its two endpoints, say $u$, and mark one edge $uw$ for some vertex $w\in U^+(u)$, with $w\neq v$ in case $v\in U^+(u)$.  
It follows from property~\eqref{disjoint_balls} of \cref{lem:invariant} that marked edges form a matching, and that each bad edge is incident to exactly one marked edge. 
Recolouring all marked edges with a new colour, say colour $\Delta+q+6$, we obtain a proper colouring $c^*$ of $G$ with $\Delta+q+6$ colours such that $S_{c^*}(u) \neq S_{c^*}(v)$ for all edges $uv \in E(G)$ with $u,v\in B$ and $d_G(u)=d_G(v)$. 
Indeed, if $uv$ is a bad edge this holds because there is exactly one marked edge incident to $uv$, and it is distinct from $uv$ itself. 
If $uv$ is not a bad edge, then by definition $u$ and $v$ see distinct sets of colours in the colouring $c$ when considering only non-selected edges. 
Since marked edges form a subset of selected edges, we see that $S_{c^*}(u) \neq S_{c^*}(v)$ as desired.

Finally, consider edges $uv\in E(G)$ with $u, v \in A$ that are isolated in $G[A]$ with $d_G(u)=d_G(v)$.  
Recall that $d_G(u)=d_G(v) \geq 2$, since $uv$ is not isolated in $G$. 
Recall also that in the initial colouring $c$ of $G$ we had $S_c(u) \cap S_c(v) = \{c(uv)\}$, that is, $u$ and $v$ saw no common colour except for that of $uv$.   
If $uv$ is fragile, then at least one of $u,v$ is such that no incident edge was selected, by property~\eqref{fragile} of \cref{lem:invariant}, and hence $S_{c^*}(u) \neq S_{c^*}(v)$. 
If $uv$ is not fragile, then $d_G(u)=d_G(v) \geq q + 4$ by definition. 
Since at most $q+2$ edges incident to $u$ were selected, and same for $v$, we see that $u$ and $v$ are each incident to a non-selected edge distinct from $uv$. 
It follows that $S_{c^*}(u) \neq S_{c^*}(v)$. 
\end{proof}

\cref{lem:big} follows then from \cref{lem:if_stops_then} and \cref{lem:big_alg_stops} in the next subsection.  

\subsection{The algorithm terminates with high probability}

Our next result shows that Algorithm~\ref{alg:big} stops with high probability. 
For simplicity, we sometimes call one iteration of the {\tt while} loop a \textit{step}. 

\begin{lemma}
\label{lem:big_alg_stops}
The probability that Algorithm~\ref{alg:big} stops in at most $t$ steps tends to $1$ as $t\to \infty$. 
\end{lemma}

As mentioned earlier, this lemma is proved using the entropy compression method introduced by Moser and Tard\'os~\cite{MT10}.  
In a nutshell, the main idea of the proof is to look at sequences of $t$ random choices such that the algorithm does not stop in at most $t$ steps.  
Exploiting the fact that the algorithm did not stop, we show how one can get an implicit lossless encoding of these sequences, by writing down a concise log of the execution of the algorithm. 
Then, looking at the structure of the algorithm, we prove that there are only $o\left(s^t\right)$ such logs. 
Since in total there are $s^t$ random sequences of length $t$, we deduce that only a $o(1)$-fraction of these make the algorithm run for at least $t$ steps. \cref{lem:big_alg_stops} follows. 

To describe the log of an execution of the algorithm, we need the following definitions. 
First, recall that a \textit{Dyck word} of semilength $k$ is a binary word $w_1w_2\dots w_{2k}$ with exactly $k$ 0s and $k$ 1s such that the number of 0s is at least the number of 1s in every prefix of the word. 
A \textit{descent} in a Dyck word is a maximal sequence of consecutive 1s, its \textit{length} is the number of 1s. 

For our purposes, it will be more convenient to drop the requirement that a Dyck word has the same number of 0s and 1s.   
Let us define a \textit{partial Dyck word} of semilength $k$ as a binary word $w_1w_2\dots w_{p}$ with exactly $k$ 0s and {\em at most} $k$ 1s such that the number of 0s is at least the number of 1s in every prefix of the word. 
Descents are defined in the same way as for normal Dyck words. 

Let us consider a sequence $(r_1,\dots, r_t)$ of $t$ random choices such that Algorithm~\ref{alg:big} does not stop in at most $t$ steps when run with these random choices. 
In other words, the algorithm is about to start its $(t+1)$-th iteration of the {\tt while} loop, at which point we freeze its execution. 
Each random choice $r_i$ consisted in choosing an admissible pair for some big vertex $u$ among its first $s$ admissible pairs, thus we see $r_i$ as a number in $[s]$.   

For each $i\in [t+1]$, let $U^+_i$ and $U^-_i$ denote the functions $U^+$ and $U^-$, respectively, at the beginning of the $i$-th iteration, and let $B_i$ denote the subset of vertices $u\in B$ with $U^+_i(u) = \emptyset$.    
We associate to the sequence $(r_1,\dots, r_t)$ a corresponding \textit{log} $(W, \gamma, \delta, U^+_{t+1})$, where $W$ is a partial Dyck word of semilength $t$ such that the length of each descent is in the set $\{2, 3, 4, 5, q+1\}$, and $\gamma=(\gamma_1, \dots, \gamma_t)$ and $\delta=(\delta_1,\dots, \delta_t)$ are two sequences of integers. 

The partial Dyck word $W$ is built as follows during the execution of the algorithm: Starting with the empty word, we add a $0$ at the end of the word each time a big vertex is treated.   
If the corresponding random choice triggers a bad event, we moreover add $\ell$ 1s at the end of the word, where $\ell$ is the number of big vertices that are reset (so $\ell=q+1, 2, 3, 4, 5$ for bad events of types $1, 2, 3, 4, 5$, respectively). 
Thus descents in $W$ are in bijection with bad events treated during the execution, and the length of a descent tells us the type of the corresponding bad event. 

The two sequences $\gamma$ and $\delta$ are defined as follows.  
For $i\in [t]$, the integers $\gamma_i$ and $\delta_i$ encode information about the bad event handled during iteration $i$. 
If there was none, we simply set $\gamma_i:=\delta_i:=-1$.  
Otherwise, $\gamma_i$ is a nonnegative integer encoding the set of big vertices that are reset when the bad event is handled, and $\delta_i$ is a nonnegative integer encoding extra information which will help us recover the random choice $r_i$ from the log. 
The precise definitions of $\gamma_i$ and $\delta_i$ depend on the type of the bad event (see the list below); however, before giving these definitions we must explain the assumptions we make. 

The definition of $\gamma_i$ assumes that the set $B_i$ is known. 
In turn, $\gamma_i$ will encode enough information to determine completely $B_{i+1}$ from $B_i$. 
Since $B_1 = B$, it then follows that we can read off all the sets $B_1, B_2, \dots, B_{t+1}$ from the sequence $\gamma$: 
For $i=1, \dots, t$, either $\gamma_i \geq 0$, in which case $B_{i+1}$ is determined by $B_i$ and $\gamma_i$.  
Or $\gamma_i = -1$, in which case no bad event occurred during iteration $i$, and thus $B_{i+1}:=B_i-\{u\}$ where $u$ is the first vertex in $B_i$. 

As already mentioned, the purpose of the log is to encode all $t$ random choices $r_1, \dots, r_t$ that have been made during the execution. 
To encode $r_i$ ($i\in [t]$), we work {\em backwards}: 
We assume that the function $U^+_{i+1}$ is known, and we show that one can then deduce $r_i$ and $U^+_{i}$ using the log. 
Since $U^+_{t+1}$ is part of the log, this implies that the log uniquely determines $r_t, r_{t-1}, \dots, r_1$, as desired. 
Let us remark that if no bad event occurred during the $i$-th iteration, then we can already deduce $r_i$ and $U^+_{i}$ from $U^+_{i+1}$ using the sets $B_i$ and $B_{i+1}$. 
Indeed, in this case $B_i = B_{i+1} \cup \{u\}$ where $u$ is the vertex treated during the $i$-th iteration. 
Thus, for $v\in B$, 
$$
U^+_{i}(v) = \left\{ 
\begin{array}{ll}
U^+_{i+1}(v) & \textrm{ if } v \neq u \\
\emptyset & \textrm{ if } v = u
\end{array} 
\right.
$$
Furthermore, $U^+_{i+1}(u)$ tells us what was the random choice $r_i$ that was made for $u$ during iteration $i$. 
Indeed, using $U^+_{i}$ we can deduce what was the set of admissible pairs for $u$ at the beginning of iteration $i$.  
Then, $r_i$ is the position of the pair $U^+_{i+1}(u)$ in the ordering of these admissible pairs. 
Therefore, it is only when a bad event happens that we need extra information to determine $r_i$ and $U^+_{i}$.   
This is precisely the role of $\delta_i$. 

{\bf Definitions of $\gamma$ and $\delta$.}   
Let $i\in [t]$. 
If no bad event occurred during iteration $i$, set $\gamma_i:=-1$ and $\delta_i:=-1$. 
Otherwise, say that a bad event $\beta$ of type $j$ was handled.  
The definition of $\gamma_i$ assumes that $B_i$ is known, while that of $\delta_i$ assumes that $B_i$ and $U^+_{i+1}$ are both known.  
In particular, we know the vertex $u$ treated at the beginning of the iteration, since it is the first vertex in $B_i$. 
With these remarks in mind, $\gamma_i$ and $\delta_i$ are defined as follows:

\begin{itemize}
\item[$j=1$~] 
The bad event $\beta$ was triggered because the admissible pair chosen for $u$ contained a vertex $v$ with $|U^-_i(v)|=q$. 
Vertex $u$ and the $q$ vertices in $U^-_i(v)$ were subsequently reset. 
There are at most $d$ choices for vertex $v$ and at most $\binom{\Delta}{q}$ choices for $U^-_i(v)$. 
We may thus encode $v$ and $U^-_i(v)$ with a number $\gamma_i \in \left[d\binom{\Delta}{q}\right]$.  
Observe that $B_{i+1} = B_i \cup U^-_i(v)$. \\

\noindent Now that $v$ and $U^-_i(v)$ are identified, we want to encode the admissible pair $\{v,x\}$ that was chosen for $u$ at the beginning of the iteration, and the sets $U^+_i(w)$ for each vertex $w\in U^-_i(v)$. 
There are at most $d$ choices for $x$, and similarly for each $w\in U^-_i(v)$ there are at most $d$ choices for the vertex in $U^+_i(w)$ which is distinct from $v$. 
We let $\delta_i \in \left[ d^{q+1} \right]$ encode these choices. 
Since $U^+_{i}$ only differs from $U^+_{i+1}$ on vertices $w\in U^-_i(v)$, with the encoded information we can deduce $U^+_{i}$ from $U^+_{i+1}$.  
Note also that $r_i$ is determined by the admissible pair $\{v,x\}$ that was chosen for $u$. \\

\item[$j=2$~] The bad event $\beta$ was triggered because the admissible pair chosen for $u$ contained a vertex $v$ incident to a fragile edge $vw$ with $U_i^-(w) \neq \emptyset$. 
Then two vertices were reset, namely $u$ and some vertex $x$ in $U_i^-(w)$.  
There are at most $d$ choices for vertex $v$. 
Once $v$ is identified, we know vertex $w$ since fragile edges form a matching. 
Finally, there are at most $q+2$ choices for $x$, since $d_G(w) \leq q+3$ and $x\neq v$.  
We let $\gamma_i \in \left[(q+2)d\right]$ encode $v$, $w$, and $x$.  
Observe that $B_{i+1} = B_i \cup \{x\}$. \\

\noindent 
Next, to encode $r_i$ we only need to specify the vertex in the admissible pair chosen for $u$ that is distinct from $v$ ($d$ choices). 
Similarly, there are at most $d$ possibilities for the set $U^+_i(x)$ since we know that it includes $w$. 
Hence, we can encode this information with a number $\delta_i \in \left[ d^2 \right]$. 
Note that, knowing $x$ and $U^+_i(x)$, we can directly infer $U^+_i$ from $U^+_{i+1}$, since $U^+_i(y)=U^+_{i+1}(y)$ for all $y\in B-\{x\}$. \\

\item[$j=3$~] 
After selecting the admissible pair for $u$, we had $S'(v)=S'(w)=S'(x)$ for three distinct vertices $v,w,x \in B-\{u\}$ with $vw, wx\in E(G)$ and $u\in N(v) \cup N(w)$. 
Then $u,v,x$ were reset.  
There are at most $2\Delta^3$ choices for the triple $v,w,x$ (the factor $2$ is due to the fact that $u$ can be adjacent to $v$ or $w$).  
We let $\gamma_i \in \left[ 2\Delta^3 \right]$ encode $v,w,x$. 
Observe that $B_{i+1}=B_i \cup \{v,x\}$. \\

\noindent 
Knowing $v,w,x$ and $U^+_{i+1}$, our next aim is to encode $U^+_i$ and $r_i$ using $\delta_i$. 
First, we simply encode the admissible pair $\{u_1, u_2\}$ that was chosen for $u$ during the $i$-th iteration explicitly, thus there are ${d \choose 2}$ possibilities.\footnote{A reader familiar with these types of encoding arguments might wonder why we bother resetting $u$ in the algorithm if we end up writing down $\{u_1, u_2\}$ explicitly in the encoding. 
It is indeed true that in this case we only `win' something thanks to the implicit encoding of the choices made for $v$ and $x$. 
Nevertheless, we still need to reset $u$ as well, to keep the property that the current vertex is always reset whenever a bad event happens.} 
Next, we observe that $U^+_{i}(y) = U^+_{i+1}(y)$ for every $y \in B - \{u,v,x\}$, and $U^+_{i}(u) = \emptyset$. 
Thus it only remains to encode $U^+_i(v)$ and $U^+_i(x)$.  
Here the idea is that, since at this point we know the set $U^+_{i}(w)$ and the admissible pair $\{u_1, u_2\}$ chosen for $u$, there are only $O(1)$ possibilities for the sets $U^+_i(v)$ and $U^+_i(x)$ in order to have that $S'(v)=S'(w)=S'(x)$ just before $u,v,x$ were reset. \\

\noindent 
Let us focus on the set $U^+_i(v)$, the argument for $U^+_i(x)$ will be symmetric. 
First, let us write down the following local information: 
(1) Is $w \in U^+_i(v)$? 
(2) Is $w \in U^+_i(x)$?  
(3) Is $v \in U^+_i(x)$? 
Thus there are 8 possibilities. 
(1)--(2) gives enough information to reconstruct the set $S'(w)$ just before the resets, since we already know $U^+_i(w)$ and whether $w \in \{u_1,u_2\}$ or not. 
From (3) we also know the set $S''(v) := S'(v) \cup \{c(vv'): v'\in U^+_i(v)\}$ just before the resets, since we know whether $v \in \{u_1,u_2\}$ or not, and whether $v\in U^+_i(z)$ or not for every $z\in N(v) - \{u\}$. 
Now, it only remains to observe that $U^+_i(v)$ is determined by the two sets $S''(v)$ and $S'(w)$, namely $U^+_i(v) = \{v'\in N(v): c(vv') \in S''(v) - S'(w)\}$. \\

\noindent 
Proceeding similary for the set $U^+_i(x)$ (8 possibilities again), this fully determines $U^+_i$. 
Now, given $U^+_i$ we know exactly the set of admissible pairs for $u$ at the beginning of the $i$-th iteration. 
Since we know that the pair $\{u_1, u_2\}$ was chosen, we can deduce the value of $r_i$. 
Hence, this shows that $U^+_i$ and $r_i$ can be encoded using a number $\delta_i \in [64 {d \choose 2}]$. 
(The constant $64$ could be reduced with a more careful analysis but this would not make a difference later on.) \\

\item[$j=4$~]
Here we let $\gamma_i \in \left[ 4\Delta^4 \right]$ encode the four vertices $v,w,x,y$ as seen from $u$ (the factor $4$ comes from the fact that $u$ is adjacent to at least one of them but we do not know which one). 
Since $u,v,w,y$ are reset during this iteration, we have $B_{i+1} = B_i \cup \{v,w,y\}$. \\

\noindent  
Next, we set up $\delta_i$ to encode $U^+_{i}$ and $r_i$ knowing $U^+_{i+1}$. 
As in the previous case, we encode the admissible pair $\{u_1, u_2\}$ that was chosen for $u$ during the $i$-th iteration explicitly ($d \choose 2$ choices). 
Once we know $U^+_{i}$, we know which are the admissible pairs for $u$ at the beginning of the $i$-th iteration, and thus we can determine $r_i$, exactly as before. 
Thus, it only remains to encode $U^+_{i}(v), U^+_{i}(w)$, and $U^+_{i}(y)$. \\

\noindent 
Let us start with $U^+_{i}(w)$. 
We already know that $x\in U^+_{i}(w)$, and we encode the other vertex in $U^+_{i}(w)$ explicitly ($d$ choices). \\

\noindent 
Next, consider $U^+_{i}(v)$. 
Here, the idea is the same as for \be{3}, namely once $U^+_{i}(w)$ is known there are only $O(1)$ possibilities for $U^+_{i}(v)$ to have that $S'(v)=S'(w)$ just before the resets. 
To be precise, we write down the following local information: 
(1) Is $w \in U^+_i(v)$? 
(2) Is $w \in U^+_i(x)$?  
(3) Is $w \in U^+_i(y)$?  
(4) Is $v \in U^+_i(x)$? 
(5) Is $v \in U^+_i(y)$? 
Thus there are 32 possibilities. 
(1)--(3) gives us enough information to reconstruct the set $S'(w)$ just before the resets, since we already know $U^+_i(w)$ and whether $w \in \{u_1,u_2\}$ or not. 
Similarly, (4)--(5) allow us to determine the set $S''(v) := S'(v) \cup \{c(vv'): v'\in U^+_i(v)\}$ just before the resets, which in turn determines $U^+_i(v)$ since $U^+_i(v) = \{v'\in N(v): c(vv') \in S''(v) - S'(w)\}$. \\

\noindent 
For $U^+_{i}(y)$, we proceed exactly as for $U^+_{i}(v)$, exchanging $v$ with $y$ and $w$ with $x$. 
The only difference here is that $x$ is not reset, thus $U^+_{i}(x)=U^+_{i+1}(x)$.  
We similarly conclude that there are at most $32$ possibilities for the set $U^+_i(y)$.  
In summary, we may encode all the necessary information with a number $\delta_i \in \left[2^{10} d {d \choose 2} \right]$. \\

\item[$j=5$:~] 
We let $\gamma_i \in \left[ 2\Delta^5 \right]$ encode the vertices $v,w,x,y,z$. 
(Recall that possibly $z=u$.) 
Since $u,v,w,x,y$ are reset during this iteration, we have $B_{i+1} = B_i \cup \{v,w,x,y\}$. \\

\noindent  
Next, we encode $U^+_{i}$ and $r_i$  based on $U^+_{i+1}$. 
Again, we encode  the admissible pair $\{u_1, u_2\}$ chosen for $u$ explicitly ($d \choose 2$ choices), which will determine $r_i$ once we know $U^+_{i}$. 
It only remains to encode $U^+_{i}(v), U^+_{i}(w), U^+_{i}(x)$, $U^+_{i}(y)$. \\

\noindent 
Similarly as for \be{4}, there are most $d$ possibilities for the set $U^+_{i}(w)$, since we already know that $z\in U^+_{i}(w)$. 
The same is true $U^+_{i}(x)$. \\

\noindent 
For $U^+_{i}(v)$ we proceed exactly as in the previous case, exploiting the fact that $U^+_{i}(w)$ is already encoded: 
Writing down which sets among $U^+_i(v), U^+_i(x), U^+_i(y)$ include vertex $w$, and similarly which of $U^+_i(x), U^+_i(y)$ include $v$, is enough to determine $U^+_{i}(v)$. Thus there are 32 choices.
This is also true for $U^+_{i}(y)$ since the situation is completely symmetric (swapping $v,w$ with $y,x$, respectively).  
Hence, we can record the desired information with a number $\delta_i \in  \left[2^{10} d^2 {d \choose 2} \right]$. 
\end{itemize}

Let $\mathcal{R}_t$ denote the set of sequences $(r_1, \dots, r_t)$ with each $r_i \in \left[ s \right]$ such that Algorithm~\ref{alg:big} does not stop in at most $t$ steps when using $r_1, \dots, r_t$ for the random choices. 
Also, let $\mathcal{L}_t$ denote the set of logs defined by the algorithm on these sequences. 
The following lemma follows from the discussion above. 

\begin{lemma}
\label{lem:lossless_encoding_big}
For each $t\geq 1$, there is a bijection between the two sets $\mathcal{R}_t$ and $\mathcal{L}_t$. 
\end{lemma}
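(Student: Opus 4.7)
The map sending a sequence $(r_1,\dots,r_t) \in \mathcal{R}_t$ to its log $(W,\gamma,\delta,U^+_{t+1})$ is well-defined (every run of the algorithm produces exactly one log) and surjective onto $\mathcal{L}_t$ by definition of $\mathcal{L}_t$. Thus the only thing to establish is injectivity: two different random sequences produce different logs. I would prove this by exhibiting an explicit inverse, i.e.\ a decoding procedure that reconstructs $(r_1,\dots,r_t)$ from $(W,\gamma,\delta,U^+_{t+1})$.

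The decoding proceeds in two passes. First a \emph{forward pass} recovers the sets $B_1,\dots,B_{t+1}$: starting from $B_1 = B$ (known), for each $i\in[t]$ one uses $\gamma_i$ to update $B_i$ to $B_{i+1}$. If $\gamma_i = -1$ no bad event occurred, so $B_{i+1} = B_i \setminus \{u\}$ where $u$ is the first vertex of $B_i$ in the fixed vertex ordering. Otherwise the length of the $(i\text{-th})$ descent of $W$ tells us the type $j$ of bad event, and $\gamma_i$ then encodes exactly the set of big vertices that were reset in iteration $i$ (as spelled out explicitly in the definition of $\gamma_i$ for each of the five types), from which $B_{i+1}$ is obtained by adding these reset vertices back to $B_i$.

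Next, a \emph{backward pass} reconstructs $U^+_i$ and $r_i$ for $i=t,t-1,\dots,1$, starting from the known $U^+_{t+1}$. At step $i$, suppose $U^+_{i+1}$ has already been recovered. The vertex $u$ treated during iteration $i$ is the first element of $B_i$, which we know from the forward pass. If $\gamma_i = -1$, then $U^+_i$ agrees with $U^+_{i+1}$ everywhere except at $u$, where $U^+_i(u) = \emptyset$, and $r_i$ is the position of $U^+_{i+1}(u)$ in the ordered list of admissible pairs for $u$ computed with respect to $U^+_i$ and $U^-_i$. Otherwise, the descent length in $W$ identifies the bad event type $j$, and $\gamma_i,\delta_i$ together carry precisely the extra data needed to recover $U^+_i$ from $U^+_{i+1}$ and then $r_i$. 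One verifies case by case ($j=1,\dots,5$) that the information itemised in the definitions of $\gamma_i$ and $\delta_i$ above is sufficient for this: in each case $\gamma_i$ pins down the reset vertices (and their relevant neighbourhood structure), and $\delta_i$ pins down both the admissible pair $\{u_1,u_2\}$ chosen for $u$ and the values of $U^+_i$ on each reset vertex other than $u$, using the local ``$S'(\cdot)=S'(\cdot)$'' constraints that forced the bad event to fire in order to cut the number of possibilities down to $O(1)$ per vertex.

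The main thing to check carefully, and the only place where the argument is not immediate bookkeeping, is that the case analysis for types $j=3,4,5$ really does determine $U^+_i(v), U^+_i(w), U^+_i(x), U^+_i(y)$ from the encoded bits: this is exactly the trick of writing down a constant number of yes/no questions (e.g.\ ``is $w \in U^+_i(v)$?'') which, together with the already-known $U^+_i(w)$ and the chosen pair $\{u_1,u_2\}$, let one compute $S'(w)$ and $S''(v) := S'(v) - \{c(vv'):v'\in U^+_i(v)\}$ at the moment of the bad event, and then recover $U^+_i(v) = \{v'\in N(v) : c(vv') \in S''(v) - S'(w)\}$ using that $S'(v) = S'(w)$ just before the reset. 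Once this has been checked for each type, the inverse map is completely specified, the composition $\mathcal{R}_t \to \mathcal{L}_t \to \mathcal{R}_t$ is the identity, and the bijection follows.
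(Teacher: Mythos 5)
Your proof is correct and takes essentially the same approach as the paper, which dispatches the lemma with the remark that it ``follows from the discussion above'': the forward pass recovering $B_1,\dots,B_{t+1}$ from $\gamma$ and the backward pass recovering $U^+_i$ and $r_i$ from $U^+_{i+1}$, $\gamma_i$, $\delta_i$ (with the descent lengths of $W$ identifying the bad-event type) are exactly the paper's decoding argument. The only nit is phrasing: the descent associated with iteration $i$ is the maximal run of $1$s immediately following the $i$-th $0$ of $W$, not the ``$i$-th descent'' of $W$, but the intent is clear and the argument unaffected.
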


Next, we bound $|\mathcal{L}_t|$ from above when $t$ is large. 
To do so we need to count some specific Dyck words where each descent is weighted with some integer:  
Given a set $E=\{(l_1, w_1), \dots, (l_k, w_k)\}$ of couples of positive integers with all $l_j$'s distinct, we let $C_{t,E}$ be the number of Dyck words of semilength $t$ where each descent has length in the set $\{l_1, \dots, l_k\}$, and each descent of length $l_j$ is weighted with an integer in $[w_j]$. 

For our purposes, we take $E:=\{(l_1, w_1), \dots, (l_5, w_5)\}$, where $(l_j, w_j)$ is determined by the characteristics of \be{$j$}: $l_j$ is the number of vertices that are reset, and $w_j$ is an upper bound on the number of values the corresponding pair $(\gamma_i, \delta_i)$ can take in the log during the corresponding $i$-th iteration of the algorithm.  
Thus, following the discussion of bad events above, we take: 

\begin{itemize}
	\item $l_1 = q+1$ and $w_1 = \binom{\Delta}{q} d^{q+2} $
	\item $l_2 = 2$ and $w_2 = (q+2)d^3$
	\item $l_3 = 3$ and $w_3 = 2^7 \Delta^3 {d \choose 2}$
	\item $l_4 = 4$ and $w_4 = 2^{12} \Delta^4  d {d \choose 2}$
	\item $l_5 = 5$ and $w_5 = 2^{11} \Delta^4  d^2 {d \choose 2}$
\end{itemize}

In our logs we deal with {\em partial} Dyck words that are weighted as above. 
The difference between the number of $0$s and $1$s in the partial Dyck word corresponds to the number of big vertices $u\in B$ for which $U^+(u)$ is currently not set; we call this quantity its {\it defect}.  
Observe that partial weighted Dyck words of semilength $t$ and defect $k$ can be mapped injectively to weighted Dyck words of semilength $t+k$ by adding $k$ occurrences of $011$ at the end, where each of the $k$ new descents of length $2$ are weighted with, say, the number $1$. 
Since $k\leq n=|V(G)|$, we obtain the following lemma. 

\begin{lemma}
	$|\mathcal{L}_t| \leq \sum_{k=0}^n C_{t+k,E}$. 
\end{lemma}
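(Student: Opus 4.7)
The plan is to construct an injection from $\mathcal{L}_t$ into the disjoint union, over $k \in \{0, 1, \ldots, n\}$, of the sets of weighted Dyck words counted by $C_{t+k, E}$; this would yield the claimed bound immediately.

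Given a log $(W, \gamma, \delta, U^+_{t+1}) \in \mathcal{L}_t$, the starting observation is that $W$ is already a partial Dyck word of semilength $t$ whose descents all have length in $\{q+1, 2, 3, 4, 5\}$, corresponding to the five bad-event types. The first step is to weight each descent associated with an iteration $i$ of bad-event type $j$ with the pair $(\gamma_i, \delta_i)$; by the case analysis preceding the lemma, this pair takes at most $w_j$ distinct values, so the decoration yields a valid partial weighted Dyck word with respect to $E$.

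The second step is to append $k$ copies of $011$ at the end, where $k$ is the defect of the partial word (the excess of $0$s over $1$s). A quick net-change computation -- a no-bad-event iteration changes $|B_i|$ by $-1$ while a bad event of type $j$ changes it by $l_j-1$ -- yields $k = |B|-|B_{t+1}|$, in particular $k \le n$. The padding produces a full Dyck word of semilength $t+k$ with $k$ new length-$2$ descents, each allowed any weight in $[w_2]$. I would use these $k$ weights to encode $U^+_{t+1}$, which is the only piece of the log not yet captured: after fixing an ordering of $B\setminus B_{t+1}$ (of cardinality $k$), weight the $m$-th padding descent with an integer in $\bigl[\binom{d}{2}\bigr]\subseteq [w_2]$ specifying the pair $U^+_{t+1}(u_m)\subseteq N^+(u_m)$. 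The inequality $\binom{d}{2} \le w_2 = (q+2)d^3$ makes this a valid weight.

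Injectivity will follow because, with the fixed parameter $t$ in hand, from the output weighted Dyck word of semilength $t+k$ one reads off $k$, then $W$ by stripping the trailing $3k$ symbols, then $(\gamma, \delta)$ from the weights on the original descents, and finally $U^+_{t+1}$ from the weights on the padding. The main delicate step is the defect identity $k = |B|-|B_{t+1}|$, where a subtle miscount could creep in; the rest is routine bookkeeping that falls out of the case analysis already in place.
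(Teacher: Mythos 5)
Your proof is correct and follows the same core idea as the paper's (pad the partial weighted Dyck word with $k$ copies of $011$ to obtain a genuine weighted Dyck word of semilength $t+k$, with $k \le n$), but you add one refinement that is worth highlighting. The paper's sketch simply assigns weight $1$ to each of the $k$ new length-$2$ descents; as a literal injection this only shows that the set of triples $(W,\gamma,\delta)$ arising from logs is bounded by $\sum_{k=0}^n C_{t+k,E}$, leaving the fourth component $U^+_{t+1}$ of the log unaccounted for (it contributes an extra factor that is independent of $t$, so the asymptotic conclusion $|\mathcal{L}_t| = o(s^t)$ is unaffected, but the inequality as written is not literally delivered by that sketch). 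Your idea of storing $U^+_{t+1}(u_m)$ for the $m$-th vertex of $B \setminus B_{t+1}$ in the weight of the $m$-th padding descent, using that $\binom{d}{2} \le w_2 = (q+2)d^3$, repairs this and makes the stated inequality exact. Your defect computation is also right: a treated-and-not-reset iteration changes both the defect and $-|B_i|$ by $+1$, and a type-$j$ bad event changes the defect by $1-l_j$ and $|B_i|$ by $l_j-1$, so $\text{defect} + |B_{i+1}|$ is invariant and equals $|B|$, giving $k = |B| - |B_{t+1}| \le n$. (Incidentally, the paper's phrase ``not set'' for the defect appears to be a slip; the defect equals the number of big vertices whose $U^+$ \emph{is} set.) The decoding you describe — recover $k$ from the semilength and the known $t$, strip $3k$ trailing symbols to get $W$, read $(\gamma_i,\delta_i)$ from the remaining descents, reconstruct $B\setminus B_{t+1}$ from $\gamma$, and then read $U^+_{t+1}$ from the padding weights — indeed establishes injectivity, and the targets for distinct $k$ are disjoint since the total word length determines $k$.
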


In our setting, $n$ and $s$ are fixed while $t$ varies; thus, to prove that $|\mathcal{L}_t| \in o\left( s^t \right)$, it is enough by the above lemma to show that $C_{t,E} \in o\left( s^t \right)$. 
In order to bound $C_{t,E}$ from above, we follow~\cite{EP13} and use a bijection between Dyck words and rooted plane trees.

\begin{lemma}
	The number $C_{t,E}$ is equal to the number of weighted rooted plane trees on $t+1$ vertices, where each vertex has a number of children in $E \cup \{0\}$, and for each $i\in [5]$ each vertex with $l_i$ children is weighted with an integer in $[w_i]$ (leaves are not weighted).
\end{lemma}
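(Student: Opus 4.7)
I would prove this by exhibiting a weight-preserving bijection $\Phi$ between the weighted Dyck words counted by $C_{t,E}$ and the weighted rooted plane trees described in the statement. Such a bijection is classical in this context and can be found in, e.g., \cite{EP13}; its defining property is that each descent of length $l_j$ in the Dyck word corresponds to an internal vertex with $l_j$ children in the tree, with the descent's weight (a value in $[w_j]$) transferring identically to the weight attached to the corresponding internal vertex. Since leaves are unweighted on both sides and semilength $t$ matches the number of edges $t$ in a tree on $t+1$ vertices, the counts match bijectively.

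I would construct $\Phi$ recursively. The empty Dyck word maps to the single-vertex tree (a leaf). For a non-empty Dyck word $w$ of semilength $t$, one identifies a specific descent of $w$ of length $l \in \{l_1, \ldots, l_5\}$ that encodes the children count of the root of $\Phi(w)$; the remainder of $w$ is decomposed into sub-Dyck-words $w_1, \ldots, w_l$ that recursively yield the $l$ sub-trees hanging from the root. Induction on $t$ then verifies that $\Phi(w)$ has $t+1$ vertices, that every internal children count lies in $\{l_1, \ldots, l_5\}$, and that the descent-to-vertex weight correspondence is preserved. The inverse of $\Phi$ is the tree-to-Dyck-word direction of the same recursive rule.

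The main subtlety, and where I would focus most of my attention, is in choosing the right decomposition. The standard DFS bijection between Dyck words and plane trees does not have the property that descents correspond to children counts (there descents correspond, roughly, to chains of rightmost descendants ending at a leaf); a modified encoding is needed so that the closing $1^l$ of each internal vertex of degree $l$ appears as a single maximal descent of length $l$ in the Dyck word, and the construction in \cite{EP13} provides exactly such an encoding. An alternative, less combinatorial route is via generating functions: the weighted plane-tree generating function satisfies $T(x) = x + x\sum_j w_j T(x)^{l_j}$ by the recursive structure of the trees, and a parallel decomposition of weighted Dyck words yields the same functional equation for $\sum_{t \geq 0} C_{t,E}\, x^{t+1}$, so comparing coefficients gives the identity.
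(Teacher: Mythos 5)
Your proposal is correct and takes essentially the same approach as the paper, whose entire proof of this lemma is a one-line deferral to Lemma~7 of~\cite{EP13}. Your additional observations — that the standard DFS encoding sends descents to backtracking chains rather than to arities, that a modified recursive decomposition (peel off a distinguished maximal descent $1^l$ recording the root's arity, then recurse on the $l$ sub-Dyck-words $w_1,\dots,w_l$ in a factorisation of the form $w = w_1\,0\,w_2\,0\cdots w_l\,0\,1^l$) repairs this, and that the functional equation $T(x) = x + x\sum_j w_j T(x)^{l_j}$ gives a generating-function verification — are all accurate and usefully supply the details the paper leaves implicit.
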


The proof of this lemma is essentially that of Lemma~7 in~\cite{EP13}. 

Now we use generating functions and the analytic method described e.g.\ in~\cite[Section 1.2]{D04}. 
Let 
\[
y(x) := \sum_{t\geq 1} C_{t,E} x^t
\]
denote the generating function associated to our objects, and let 
\[
\phi(x) := 1 + \sum_{i=1}^5 w_i x^{l_i}. 
\]
Then $y(x)$ satisfies $y(x) = x \phi(y(x))$. 
As noted in~\cite[Theorem~5]{D04} (see also~\cite[p.278, Proposition~IV.5]{FS09}), the following asymptotic bound holds for $C_{t,E}$. 

\begin{theorem}\label{thm:asymptotic}
	Let $R$ denote the radius of convergence of $\phi$ and suppose that $\lim_{x \to R^-} \frac{x \phi'(x)}{\phi(x)} > 1$. 
	Then there exists a unique solution $\tau \in (0,R)$ of the equation $\tau \phi'(\tau) = \phi(\tau)$, and $C_{t,E} = O(\gamma^t) $, where $\gamma := \phi(\tau)/\tau$.
\end{theorem}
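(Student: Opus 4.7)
The plan is to verify the hypotheses of the smooth implicit function schema from analytic combinatorics and then invoke standard singularity analysis. The proof splits naturally into three stages: existence and uniqueness of $\tau$; description of the dominant singularity of $y(x)$; and coefficient extraction.

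For the first stage, write $\phi(x) = \sum_{n\geq 0} a_n x^n$ and set $\psi(x) := x\phi'(x)/\phi(x)$. Probabilistically, $\psi(x)$ is the expectation of the integer random variable $N$ with $\Pr[N=n]$ proportional to $a_n x^n$, and a direct calculation yields $\psi'(x) = \mathrm{Var}(N)/x$, which is strictly positive on $(0,R)$ because $\phi$ has several distinct nonzero terms. Hence $\psi$ is strictly increasing. Since the lowest nonconstant monomial of $\phi$ has degree $l_2 \geq 2$, we have $\psi(x) \to 0$ as $x \to 0^+$, while $\lim_{x\to R^-}\psi(x) > 1$ by hypothesis; the intermediate value theorem yields a unique $\tau \in (0,R)$ with $\tau\phi'(\tau) = \phi(\tau)$.

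For the second stage, I would analyse $y(x)$ via its inverse $g(y) := y/\phi(y)$. This map is analytic near $0$, with $g'(y) = (\phi(y)-y\phi'(y))/\phi(y)^2$ vanishing precisely at $y=\tau$. A quick computation gives $g''(\tau) = -\tau\phi''(\tau)/\phi(\tau)^2 < 0$ since $\phi$ is strictly convex on $(0,R)$. Hence the analytic branch of $y$ satisfying $y(0)=0$ extends along $[0,\rho)$, where $\rho := \tau/\phi(\tau) = 1/\gamma$, and local Puiseux inversion gives $y(x) = \tau - c\sqrt{1 - x/\rho} + O(1 - x/\rho)$ near $\rho$, for some $c > 0$. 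Thus $y$ has the classical square-root singularity of simple varieties of trees at $\rho$.

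Finally, the Flajolet--Odlyzko transfer theorem yields $[x^t]\,y(x) = \Theta(\gamma^t/t^{3/2})$, and in particular $C_{t,E} = O(\gamma^t)$. I expect the only real technical point to be ensuring that $\rho$ is the unique dominant singularity on the circle $|x|=\rho$, i.e., an aperiodicity condition. This reduces to $\gcd\{l_i : w_i > 0\}=1$, which holds here because $l_2=2$ and $l_3=3$ are coprime. Beyond this, the statement is an instance of the Drmota--Lalley--Woods smooth implicit function framework, exactly as indicated in the preceding remark.
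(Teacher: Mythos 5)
Your argument is correct, but there is nothing to compare it against: the paper does not prove \cref{thm:asymptotic} at all — it treats it as a known result, citing Drmota~\cite{D04} (Theorem~5) and Flajolet--Sedgewick~\cite{FS09} (Proposition~IV.5) and only verifying that the hypotheses hold for its particular $\phi$. What you have written is essentially a self-contained account of that standard theorem, following the classical smooth implicit-function / simple-variety-of-trees route: monotonicity of $x\phi'(x)/\phi(x)$ (via the variance interpretation) gives existence and uniqueness of $\tau$; inversion of $g(y)=y/\phi(y)$ near the double root $y=\tau$ gives the square-root singularity at $\rho=\tau/\phi(\tau)=1/\gamma$; and singularity analysis transfers this to $C_{t,E}=\Theta(\gamma^t t^{-3/2})$, which is stronger than the stated $O(\gamma^t)$. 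Your computations of $\psi'(x)=\mathrm{Var}(N)/x>0$ and $g''(\tau)=-\tau\phi''(\tau)/\phi(\tau)^2<0$ are correct, and you are right to flag aperiodicity — though note that for the mere $O(\gamma^t)$ bound (as opposed to the sharp asymptotic) periodicity would be harmless, since each of the finitely many singularities on $|x|=\rho$ contributes $O(\gamma^t t^{-3/2})$; in the present application $\gcd(l_2,l_3)=\gcd(2,3)=1$ anyway. In short, you supply a full proof of a theorem the paper uses as a black box, and the proof is sound.
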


The radius of convergence of our function $\phi$ is $R=\infty$, and $\lim_{x \to \infty} \frac{x \phi'(x)}{\phi(x)} > 1$, thus the theorem applies. 
For our purposes, it is not necessary to compute exactly $\tau$, a good upper bound on $\gamma=\phi(\tau)/\tau$ will be enough. 
To obtain such an upper bound we use the following lemma. 

\begin{lemma}\label{lem:majoration_coef}
	For every $x\in(0,R)$, if $x\phi'(x)/\phi(x)< 1$ then $\phi(\tau)/\tau < \phi(x)/x$.
\end{lemma}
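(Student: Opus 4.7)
The plan is to view $f(x) := \phi(x)/x$ as a function on $(0,R)$ and show that its unique minimum on that interval is attained at $\tau$. A direct computation gives
\[
f'(x) = \frac{x\phi'(x) - \phi(x)}{x^2},
\]
so $f'(x)$ has the same sign as $g(x) - 1$, where $g(x) := x\phi'(x)/\phi(x)$. By the defining equation of $\tau$ in \cref{thm:asymptotic}, $g(\tau) = 1$.

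The key step is to show that $g$ is strictly increasing on $(0, R)$. Write $\phi(x) = \sum_{i \geq 0} a_i x^i$ with $a_i \geq 0$ (in our setting $a_0 = 1$, $a_{l_j} = w_j$ for $j \in [5]$, and all other coefficients vanish), and note the identities $\phi(x) = \sum a_i x^i$, $x\phi'(x) = \sum i a_i x^i$, and $x^2\phi''(x) + x\phi'(x) = \sum i^2 a_i x^i$. Applying the Cauchy--Schwarz inequality to the sequences $\bigl(\sqrt{a_i}\, x^{i/2}\bigr)_i$ and $\bigl(i\sqrt{a_i}\, x^{i/2}\bigr)_i$ gives
\[
\bigl(x\phi'(x)\bigr)^2 \;\leq\; \phi(x)\bigl(x^2\phi''(x) + x\phi'(x)\bigr),
\]
which after dividing by $x$ and rearranging becomes $\phi(x)\phi'(x) + x\phi(x)\phi''(x) - x\phi'(x)^2 \geq 0$. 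One checks that this quantity equals $\phi(x)^2 \cdot g'(x)$, so $g'(x) \geq 0$. Since $\phi$ has at least two nonzero coefficients, Cauchy--Schwarz is strict here, and therefore $g'(x) > 0$.

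With the monotonicity of $g$ in hand, the lemma follows immediately: the hypothesis $g(x) < 1 = g(\tau)$ forces $x < \tau$, while $g < 1$ on the entire interval $(0,\tau)$ gives $f' < 0$ there, so $f$ is strictly decreasing on $(0,\tau)$. In particular, $\phi(x)/x = f(x) > f(\tau) = \phi(\tau)/\tau$, as desired. The only mild obstacle is the strict monotonicity of $g$; the Cauchy--Schwarz route above is short, but one could equivalently observe that $g(x)$ is the expectation of a random variable taking value $i$ with probability proportional to $a_i x^i$, a distribution whose likelihood ratio is monotone in $i$, so which shifts stochastically upward as $x$ grows.
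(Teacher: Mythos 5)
Your argument is correct and follows essentially the same route as the paper: both reduce the claim to the monotonicity of $g(x)=x\phi'(x)/\phi(x)$ and then deduce from $\frac{\partial}{\partial x}\bigl(\phi(x)/x\bigr)=\bigl(x\phi'(x)-\phi(x)\bigr)/x^2$ that $\phi(x)/x$ is decreasing on $(0,\tau)$. The only difference is that the paper simply cites Flajolet--Sedgewick (Note IV.46) for the monotonicity of $g$, whereas you supply a self-contained Cauchy--Schwarz proof of that fact; your derivation is correct, including the strictness (since $\phi$ has at least two nonzero coefficients at distinct indices).
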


\begin{proof}
	As noted in~\cite[Note IV.46]{FS09} the function $x\phi'(x)/\phi(x)$ is increasing on $(0,R)$.
	Thus, $x\phi'(x)/\phi(x)< 1$ if and only if $x < \tau$.
	Consider the function $x\phi'(x)/\phi(x)$ on $(0, \tau)$. 
	Since $x \phi'(x) / \phi(x) < 1$, we have $x \phi'(x) - \phi(x) < 0$. 
	Moreover, since $\frac{\partial}{\partial x}( \frac{\phi(x)}{x} ) = \frac{x \phi'(x) - \phi(x)}{x^2}$, we see that $\frac{\phi(x)}{x}$
	is decreasing on $(0,\tau)$.   
	Hence, $\frac{\phi(x)}{x} > \frac{\phi(\tau)}{\tau}$. 
\end{proof}

Using these tools we can bound $\gamma$ from above. 

\begin{lemma}
\label{lem:counting_big} 
$\gamma < s$ when $d$ is large enough.  
\end{lemma}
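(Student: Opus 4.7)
The approach is to apply \cref{lem:majoration_coef}: it suffices to exhibit a single $x_0 \in (0,R)$ such that the two conditions $x_0\phi'(x_0)/\phi(x_0) < 1$ and $\phi(x_0)/x_0 < s$ both hold, because the former places $x_0$ below $\tau$ and then the latter forces $\gamma = \phi(\tau)/\tau < \phi(x_0)/x_0 < s$. Since $s = \binom{d-q}{2}-3d \sim d^2/2$, we need $1/x_0$ to be roughly comparable to $d^2/2$, which suggests picking $x_0$ of the form $c/d^2$ for a suitable constant $c$.

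With $x_0 = c/d^2$ and $d \sim \Delta/2$, I would first carry out a rough magnitude check on each term $w_i x_0^{l_i}$. A quick calculation using $\Delta/d \to 2$ gives
\[
w_1 x_0^{l_1} = \binom{\Delta}{q}\,d^{q+2}\cdot \frac{c^{q+1}}{d^{2q+2}} = \binom{\Delta}{q}\frac{c^{q+1}}{d^q} \longrightarrow \frac{2^q c^{q+1}}{q!},
\]
while for $i=2,3,4,5$ the terms $w_i x_0^{l_i}$ are $\Theta(1/d)$ or $\Theta(1/d^2)$ (because the polynomial degree of $w_i$ in $(\Delta,d)$ is strictly smaller than $2l_i$ for each of these bad-event types). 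Thus only the \be{1} contribution survives in the limit, and one has $\phi(x_0) = 1 + 2^q c^{q+1}/q! + o(1)$ as $d\to\infty$.

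Next I would set up the two conditions in the limit. Writing $s = d^2/2 - O(d)$ and $\phi(x_0)/x_0 = \phi(x_0)\,d^2/c$, condition~(b) reduces to
\[
\frac{1}{c} + \frac{c^{q}\,2^{q}}{q!} < \frac{1}{2}
\]
up to $o(1)$ corrections, and similarly condition~(a) reduces to $q\cdot c^{q+1} 2^q/q! < 1$, i.e.\ $c^{q+1} 2^q < (q-1)!$. Optimising (b) over $c$ gives $c^\star = 2(q+1)/q$; for $q = 13$ this is $c^\star = 28/13 \approx 2.154$, and a direct numerical check verifies $1/c^\star + (c^\star)^{13}\cdot 2^{13}/13! < 1/2$ and $(c^\star)^{14}\cdot 2^{13} < 12!$. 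Both inequalities are strict with some slack, so they persist once one accounts for the $o(1)$ error terms (coming both from $d/\Delta \to 1/2-\epsilon$ and from the $w_2, w_3, w_4, w_5$ contributions); thus the two inequalities hold for $d$ large enough, and \cref{lem:majoration_coef} delivers $\gamma < s$.

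The main obstacle is essentially bookkeeping: one must verify that with the specific value $q = 13$ chosen in the paper, the minimum $f(c^\star) = q\,(c^\star)^{q+1} = 2^{q+1}(q+1)^{q+1}/q^q$ of the relevant objective is below the threshold $q!/2^q$, and that the slack between the two sides is large enough to absorb the $\epsilon$-correction in $d/\Delta$ and the lower-order contributions from the other bad-event types. Conceptually nothing is difficult, but the constant $q=13$ is tight: one checks that for $q = 12$ the analogous inequality fails, which is why $q=13$ is described as the optimised constant in the definition of the algorithm.
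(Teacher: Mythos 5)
Your proposal is correct and takes essentially the same route as the paper: both invoke \cref{lem:majoration_coef} at a test point $x$ of scale $\Theta(1/d^2)$, observe that the contributions of $w_2,\dots,w_5$ to $\phi$ are $O(1/d)$ and hence negligible next to the $w_1$ term, and reduce both conditions to inequalities in $q$ that are then checked numerically for $q=13$. (Indeed the paper's choice $x=(q(1+\epsilon_1)w_1)^{-1/(q+1)}$ corresponds precisely to $x_0 = c/d^2$ with $c$ tending to the minimiser of $1/c + 2^q c^q/q!$, scaled down by $(1+\epsilon_1)^{-1/(q+1)}$.) One small inaccuracy: $c^\star = 2(q+1)/q$ is not the minimiser of $1/c + 2^qc^q/q!$ --- the first-order condition gives $c = \bigl((q-1)!/2^q\bigr)^{1/(q+1)} \approx 2.19$ for $q=13$, whereas $28/13 \approx 2.154$ is instead the value at which the minimum $\tfrac{q+1}{qc}$ would equal $1/2$. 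This does not affect the validity of your argument, since the two explicit numerical inequalities you verify at $c = 28/13$ do hold (with slack), and that is all that \cref{lem:majoration_coef} requires; but the ``optimisation'' framing and the expression $f(c^\star) = q(c^\star)^{q+1}$ should be corrected or rephrased.
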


\begin{proof}
We will use \cref{lem:majoration_coef}. 
Let $\epsilon_1 > 0$ be fixed (at the end of the proof $\epsilon_1$ will be taken small enough as a function of $q=13$).  
Let 
\[
x := \left(\frac{1}{q(1+ \epsilon_1)w_1 }\right)^{1/(q+1)}.
\] 
We claim that $x\phi'(x)/\phi(x)< 1$ when $d$ is large enough. 
First, let us give some intuition: 
If we counted only the subset of weighted Dyck words where each descent is of length $l_1=q+1$ and is weighted with an integer in $[w_1]$, then the corresponding function $\phi$ would be $\phi(x) = 1 + w_1 x^{q+1}$, and one would get $\tau=\left(\frac{1}{q w_1 }\right)^{1/(q+1)}$. 
As it turns out, the value of $\tau$ for our function of $\phi$ tends to that one (from below) as $d\to \infty$, hence our choice of $\left(\frac{1}{q w_1 }\right)^{1/(q+1)}$, slightly scaled down, for $x$.  

To show $x\phi'(x)/\phi(x)< 1$, we make the following observations, each of which is self evident: 
\begin{itemize}
	\item $x\phi'(x) = \sum_{i=1}^5l_iw_i x^{l_i}$
	\item $\phi(x) \geq 1 + w_1 x^{q+1}$
	\item $x = O\left(\frac{1}{d^2}\right)$
	\item $l_i w_i = O(d^{2l_i - 1})$ for each $i \in [2,5]$.
\end{itemize}

It follows that 
\[
\frac{\sum_{i = 2}^5 l_i w_i x^{l_i}}{\phi(x)}  = O\left(\frac{1}{d}\right) 
\]
and
\[
    \frac{x\phi'(x)}{\phi(x)} 
    \leq \frac{(q+1) w_1 x^{q+1}}{1 + w_1 x^{q+1} } + O\left(\frac{1}{d}\right) 
	= \frac{\frac{1}{1+\epsilon_1} \cdot \frac{q+1}{q} }{1 + \frac{1}{(1+\epsilon_1)q}} + O\left(\frac{1}{d}\right) 
	=  \frac{\frac{1}{1+\epsilon_1} \cdot \frac{q+1}{q} }{\frac{1}{1+\epsilon_1} \cdot \frac{q+1}{q} + \frac{\epsilon_1}{1+\epsilon_1}} + O\left(\frac{1}{d}\right). 
\]

Thus $x\phi'(x)/\phi(x) < 1$ when $d$ is large enough, as claimed. 
Hence, to prove the lemma it is enough to show that $\phi(x)/x < s$ for $d$ large enough, by \cref{lem:majoration_coef}.  

Observe that 
\[
\frac{\phi(x)}{x} = \frac{1}{x} + w_1x^q + O(d).
\]
Since $s=\binom{d-q}{2} - 3d=\Theta(d^2)$, to prove that $\phi(x)/x < s$ for $d$ large enough it is enough to show that $1/x + w_1x^q < (1-\delta)s$ for some fixed $\delta >0$.  
Let 
\[
c_{q, \epsilon_1}:=(q(1+\epsilon_1))^{1/(q+1)} + \left(\frac{1}{q(1+\epsilon_1)}\right)^{q/(q+1)}.
\]
Using that $\binom{a}{b} \leq \frac{a^b}{b!}$ and $d \leq \Delta/2$, we obtain the following bound: 
\[
	\frac{1}{x} + w_1 x^{q} 
	= c_{q, \epsilon_1} \left(\binom{\Delta}{q} d^{q+2}\right)^{1/(q+1)} 
	 \leq c_{q, \epsilon_1} \left(\frac{\Delta^{2q+2}}{2^{q+2}q!} \right)^{1/(q+1)}  
	 = c_{q, \epsilon_1} \left(\frac{1}{2^{q+2}q!}\right)^{1/(q+1)} \Delta^2. 
\]

Since $s = \binom{d-q}{2} - 3d$, for any fixed $\epsilon' >0$ we have 
$s \geq \frac{1 - \epsilon'}{2} d^2 \geq \frac{(1 - \epsilon')(1/2 - \epsilon)^2}{2} \Delta^2 $ if $d$ is large enough. 
Hence, to conclude the proof it suffices to show that the following inequality holds if $\epsilon$, $\epsilon'$ and $\epsilon_1$ are chosen small enough:  
\begin{align}    
  c_{q, \epsilon_1} \left(\frac{1}{2^{q+2}q!}\right)^{1/(q+1)} < \frac{(1 - \epsilon')(1/2 - \epsilon)^2}{2}.
\end{align}
%%% c_{13, 0} = 13**(1/14) + 13**(-13/14) = 1.29346
%%% second factor =((2**15)*(13!))**(-1/14) = 0.09503 
%%% product = 0.12292 < 0.125
This is true, since $c_{q, 0} \left(\frac{1}{2^{q+2}q!}\right)^{1/(q+1)} \simeq 0.12292 < 1/8$ for $q = 13$.
\end{proof}

It follows from \cref{thm:asymptotic} and the above lemma that $C_{t,E} \in o\left( s^t \right)$, and hence $|\mathcal{L}_t| \in o\left( s^t \right)$, when $d$ is large enough. 
(To avoid any confusion, let us emphasise that here the $o(\cdot)$ notation is w.r.t.\ the variable $t$, that is, we first assume that $d$ is large enough for \cref{lem:counting_big} to hold, and then when the graph is fixed we let $t$ vary.)
Since there are $s^t$ random sequences of length $t$, \cref{lem:big_alg_stops} follows from \cref{lem:lossless_encoding_big} and \cref{lem:counting_big}. This concludes the proof of \cref{lem:big_alg_stops}.

\section{Small vertices}
\label{sec:small}

In this section we prove the following result. 

\begin{lemma}
\label{lem:small} 
Let $m$ be an integer with $m \geq 2$. 
Suppose $G$ has a colouring $c^*$ with $\Delta + m$ colours such that $S_{c^*}(u) \neq S_{c^*}(v)$ for every edge $uv \in E(G)$ with $u,v\in B$, and for every edge $uv \in E(G)$ with $u,v\in A$ which is isolated in $G[A]$. 
Then $G$ has an AVD-colouring with $\Delta + m$ colours. 
\end{lemma}

Note that \cref{thm:main} follows from \cref{lem:big} and \cref{lem:small} with $m=19$. 
Thus it only remains to prove \cref{lem:small}, which we do now.   

Let $c^*$ be a colouring of $G$ as in the statement of \cref{lem:small}. 
Thus, $S_{c^*}(u) \neq S_{c^*}(v)$ for every edge $uv \in E(G)$ with $u,v\in B$, and for every edge $uv \in E(G)$ with $u,v\in A$ which is isolated in $G[A]$. 
However, we could have $S_{c^*}(u) = S_{c^*}(v)$ for some non-isolated edges $uv$ of $G[A]$.  
Let $A'$ be the subset of vertices of $A$ that are not incident to an isolated edge of $G[A]$. 
In this section we modify the colouring $c^*$ on the graph $G[A']$ only, and make sure that $S_{c^*}(u) \neq S_{c^*}(v)$ for every $uv \in E(G)$ with $u,v\in A'$. 
Since this has no effect on the sets $S_{c^*}(u)$ for $u\in B \cup (A-A')$, the resulting colouring will be an AVD-colouring of $G$.  

First, uncolour every edge of $G[A']$ and fix an arbitrary ordering of these edges. 
We colour these edges one by one using the following iterative algorithm; at all times, we let $c_{AVD}$ denote the current partial colouring of $G$.   
Consider the first uncoloured edge $uv$ in the ordering. 
Let $s := \lceil 2 \epsilon \Delta \rceil$. 
Since $(d_G(u)-1) + (d_G(v)-1) \leq 2(d - 1) \leq \Delta - 2\epsilon \Delta$,  there are at least $s + m$ available choices for the edge $uv$ in order to maintain a proper (partial) colouring.  
In case all other edges around $u$ are already coloured, we possibly remove one colour from the set of available choices as follows: 
Say that a neighbour $w$ of $u$ in $A' - \{v\}$ is {\it dangerous} for $u$ if $d_G(u)=d_G(w)$, all edges incident to $w$ are already coloured, and $S_{c_{AVD}}(w)= S_{c_{AVD}}(u) \cup \{i\}$ for some colour $i\in [s+m]$; the colour $i$ is a {\it dangerous colour} for $u$. 
Dangerous neighbours and colours for $v$ are defined similarly.  
If $u$ has exactly one dangerous neighbour, remove the corresponding dangerous colour from the set of available choices. 
Do the same for $v$. 
Thus, there are at least $s + m - 2 \geq s$ available choices remaining for the edge $uv$. 
Colour $uv$  with a colour chosen at random among the first $s$ colours available. 

As with the algorithm from the previous section we define some bad events that could happen after colouring the edge $uv$. 
Here, we only need to consider one type of bad event: 
\[
\textrm{The edge $uv$ received a colour that was dangerous for $u$ or $v$.}
\]
If such an event happens, consider a corresponding dangerous neighbour $w$, say it was dangerous for $u$. 
Let $F$ denote the set of edges incident to $u$ in $G[A']$ that are distinct from $uv$. 
Observe that $|F|\geq 2$, since otherwise we would have removed the dangerous colour for $u$ from the available choices. 
Our ordering of the edges of $G[A']$ induces an ordering of the edges in $F$; it will be convenient to see this ordering as a {\em cyclic} ordering. 
With these notations, the bad event is handled as follows:\footnote{We note that there is nothing special about the edge just after $uw$ in the cyclic ordering of $F$, this is an arbitrary choice. We could have chosen any edge of $F$ distinct from $uw$ as long as, knowing only $uv, w$ and $F$, we are be able to deduce which edge has been chosen.}  
\[
\textrm{Uncolour $uv$ and the edge just after $uw$ in the cyclic ordering of $F$.}
\]

After possibly handling one such bad event, the algorithm proceeds with the next uncoloured edge in this fashion, until every edge is coloured.

\begin{lemma}
\label{lem:if_alg_small_stops_then}
If the algorithm terminates, then the resulting colouring $c_{AVD}$ is an AVD-colouring of $G$. 
\end{lemma}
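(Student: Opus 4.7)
My plan is to verify the two properties of an AVD-colouring---properness and distinct colour sets across every edge---by a case analysis on the edge $uv$. Properness is immediate from the algorithm: every colour assigned to an edge is chosen from the current set of available colours, which by construction excludes every colour already used at either endpoint.

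For the AVD condition I would dispose of the easy cases first. Since the algorithm modifies only colours of edges in $G[A']$ and $A' \subseteq A$ is disjoint from $B$, every vertex in $B$ retains its $c''$-colour set: $S_{c'''}(x) = S_{c''}(x)$ for every $x \in B$. Hence if at least one endpoint is big, the inequality $S_{c'''}(u) \neq S_{c'''}(v)$ follows either from the AVD properties of $c''$ proved in \cref{sec:big} (both endpoints big) or from the size comparison $d_G(v) < d \leq d_G(u)$ (one big, one small). If both endpoints lie in $A - A'$, the edge $uv$ is isolated in $G[A]$ and none of its incident edges lie in $G[A']$; again $c''' \equiv c''$ at $u$ and $v$ and the result of \cref{sec:big} applies. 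The case of one endpoint in $A - A'$ and one in $A'$ does not occur: a vertex of $A - A'$ has a unique $A$-neighbour (the other endpoint of its isolated $A$-edge), which is itself in $A - A'$. Finally, if $u,v \in A'$ with $d_G(u) \neq d_G(v)$, the colour sets have different sizes.

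The main case is $u, v \in A'$ with $d_G(u) = d_G(v)$. I would argue by contradiction: suppose $S_{c'''}(u) = S_{c'''}(v)$. Let $t^{*}$ be the last iteration of the algorithm that assigns a colour to an edge incident to $u$ or $v$ such that this colour persists to the end, let $e^{*}$ be the edge coloured at $t^{*}$, and set $\gamma := c'''(e^{*})$. After $t^{*}$ neither $u$'s nor $v$'s colour set is further modified. Assume first $e^{*} \neq uv$: without loss of generality $u \in e^{*} = uv'$ with $v' \notin \{u,v\}$. Every edge at $v$ is then coloured already at the start of $t^{*}$, and $v \in A \setminus \{v'\}$ is a fully-coloured $A$-neighbour of $u$. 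A direct computation using $d_G(u) = d_G(v)$ and $S_{c'''}(u) = S_{c'''}(v)$ shows that $v$'s colour set at the start of $t^{*}$ equals $u$'s colour set at the start of $t^{*}$ together with $\gamma$: exactly the dangerous-neighbour condition for $u$, with dangerous colour $\gamma$. The algorithm would therefore either have removed $\gamma$ from the available choices for $u$ (if $v$ were the unique dangerous neighbour of $u$) or have triggered the bad event when picking $\gamma$ (if there were several dangerous neighbours), contradicting in either case the assumption that $\gamma$ is the final colour of $e^{*}$.

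The remaining subcase $e^{*} = uv$ is the main obstacle of the proof, because $v$ is excluded by definition from the dangerous-neighbour check at $u$ (and symmetrically for $v$). I would resolve it by tracing back to the earlier iteration $t^{**}$ that finalised a colour in the rest of $v$'s neighbourhood---the last iteration colouring an edge incident to $v$ but distinct from $uv$. A symmetric use of the dangerous-neighbour machinery at $t^{**}$, this time considering a suitable $A$-neighbour of the vertex being processed there, forces the equality $S_{c'''}(u) = S_{c'''}(v)$ to entail a bad event at $t^{**}$, contradicting that the colour chosen there is preserved. Verifying this last step is the delicate part of the argument.
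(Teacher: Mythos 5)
Your high-level plan coincides with the paper's: locate the ``last'' iteration affecting $u$ or $v$ and argue that the dangerous-neighbour mechanism must have been triggered there, yielding a contradiction. Your treatment of the subcase $e^{*} \neq uv$ is clean. But the subcase $e^{*} = uv$, which you explicitly flag as ``the delicate part'', is precisely where the argument has to do real work, and your sketch does not close it.

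To see why the trace-back to $t^{**}$ is not automatic: at $t^{**}$ the algorithm colours some edge $vw$ (say) for the last time, and you would like to conclude that $u$ is then a dangerous neighbour of $v$ with dangerous colour $c'''(vw)$. But $u$ is a candidate dangerous neighbour of $v$ only if \emph{all} edges incident to $u$ are coloured at the start of $t^{**}$, and this includes $uv$. Since we are in the subcase $\tau(uv) > \tau(vw) = t^{**}$, the edge $uv$ may well be uncoloured at $t^{**}$ (this is in fact the generic situation when $vw$ precedes $uv$ in the edge ordering and no bad events occur). When $uv$ is uncoloured, the dangerous check for $v$ is not even run, and no contradiction arises at $t^{**}$. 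Note, however, that if $uv$ happens to carry \emph{some} colour $\beta$ at $t^{**}$ (not necessarily $c'''(uv)$), then a short computation shows $u$ is indeed dangerous for $v$ with dangerous colour $c'''(vw)$, regardless of $\beta$; so the genuinely open case is the one where $uv$ is uncoloured at $t^{**}$. Your proposal does not address it, and this is the gap.

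The paper's own proof picks $e$ to be the last-coloured edge among those incident to $u$ or $v$ and \emph{distinct from} $uv$, which avoids the subcase $e^{*}=uv$ by fiat, and then asserts that $u$ is dangerous for the opposite endpoint of $e$ just before its final colouring. Whether you use the paper's choice of $e$ or yours, the step that needs to be made rigorous is the same one you identified: one must rule out, or otherwise handle, the situation in which $uv$ is still uncoloured at the moment the last edge around $u$ or $v$ other than $uv$ receives its final colour. Your proposal correctly isolates this as the crux, but does not resolve it.
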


\begin{proof}
Consider an edge $uv \in E(G)$ with $d_G(u)=d_G(v)$. 
We already know that $S_{c_{AVD}}(u) \neq S_{c_{AVD}}(v)$ if $u,v\in B$ or if $uv$ is an isolated edge in $G[A]$, so may assume that $u,v\in A'$. 
Arguing by contradiction, suppose that $S_{c_{AVD}}(u) = S_{c_{AVD}}(v)$. 
Recall that $G[A']$ has no isolated edges, thus there is at least one edge incident to $u$ or $v$ which is distinct from $uv$ in $G[A']$.  
Let $e$ be the last edge coloured by the algorithm among all such edges.  Suppose w.l.o.g.\ that $e$ is incident to $v$, say $e=vw$. 
Then, just before the edge $vw$ was coloured for the last time, vertex $u$ was dangerous for $v$, with dangerous colour $c_{AVD}(vw)$. 
Hence, a bad event has been triggered after colouring $vw$. 
The bad event that was handled by the algorithm could have been the one with edge $uv$, or another one corresponding to another edge incident to $v$ or $w$. 
In any case, the edge $vw$ got uncoloured, a contradiction.  
\end{proof}

Thanks to \cref{lem:if_alg_small_stops_then}, to conclude the proof of \cref{lem:small} it only remains to show that the algorithm terminates with nonzero probability.   
This is done in the following lemma. 

\begin{lemma}
\label{lem:ends} 
The probability that the algorithm stops in at most $t$ steps tends to $1$ as $t\to \infty$. 
\end{lemma}
\begin{proof}
The proof is very similar to the corresponding proof in the previous section (but simpler). 
Let us encode the first $t$ steps (iterations) of an execution of the algorithm with a corresponding {\it log} $(W, \gamma, \delta, c_{AVD})$, where 
\begin{itemize}
	\item $W$ is a partial Dyck word of semilength $t$, obtained by adding a $0$ (a $1$) each time an edge is coloured (uncoloured, respectively); 
	\item $\gamma=(\gamma_1, \dots, \gamma_t)$;
	\item $\delta=(\delta_1, \dots, \delta_t)$; 
	\item $c_{AVD}$ is the current colouring at the end of the $t$-th iteration. 
\end{itemize}
For each $i\in [t]$, we let $\gamma_i := -1$ and $\delta_i := -1$ in case no bad event was triggered during the $i$-th iteration. 
Otherwise, if a bad event occurred, say involving a vertex $w$ that was dangerous for one of the two endpoints of the edge $uv$ coloured during the iteration, we let $\gamma_i \in [2d]$ identify vertex $w$ knowing $uv$ (recall that $u$ and $v$ have degree at most $d$). 
Observe that this identifies also the extra edge that is uncoloured (besides the edge $uv$). 

Then, we let $\delta_i \in [2]$ identify the colours of the two edges that got uncoloured, assuming we know these two edges and the colouring $c_{AVD}$ at the end of iteration $i$. 
Observe that we already know the {\em set} of colours that was used for these two edges, these are the two colours appearing around $w$ but not around the vertex ($u$ or $v$) that triggered the bad event. 
Thus it only remains to specify the mapping of these two colours to the two edges ($2$ possibilities). 

Reading $W$ and $\gamma$ from the beginning, one can deduce which subset of the edges of $G[A']$ was coloured at any time during the execution. 
Then using the colouring $c_{AVD}$ at the end of the $t$-th iteration and working backwards, we can reconstruct the colouring $c_{AVD}$ at any time during the execution using $\gamma$ and $\delta$, and deduce in particular which random choice was made for the edge under consideration during the $i$-th iteration. 
Hence, the log $(W, \gamma, \delta, c_{AVD})$ uniquely determines the $t$ random choices that were made. 

As before, we see a random choice as a number in $[s]$. 
Let $\mathcal{R}_t$ denote the set of vectors $(r_1, \dots, r_t)$ of length $t$, where each entry is a number in $[s]$. 
Let $\mathcal{L}_t$ denote the set of logs after $t$ steps resulting from executions of the algorithm that last for at least $t$ steps. 
By the discussion above, there is an injective mapping from $\mathcal{L}_t$ to $\mathcal{R}_t$. 
Since $|\mathcal{R}_t|=s^t$, to prove \cref{lem:ends} it only remains to show that $|\mathcal{L}_t|=o(s^t)$. 

Here, a rather crude counting will do. 
First, we count the partial Dyck words $W$ of semilength $t$ that can appear in our logs. 
Each such word has only descents of length $2$. 
They can be mapped to Dyck words of semilength $t$ simply by adding the missing $1$s at the end. 
Notice that each Dyck word of semilength $t$ is the image of at most two such partial Dyck words. 
(Two of our partial Dyck words have the same image iff they are the same except one ends with $0$ and the other ends with $011$.) 
Hence, the number of our partial Dyck words of semilength $t$ is at most twice the number of Dyck words of semilength $t$, and thus is at most $2\cdot 4^t$. 

Next, given a log $(W, \gamma, \delta, c_{AVD})$, the indices $i\in [t]$ such that $\gamma_i\neq -1$ and $\delta_i \neq -1$ correspond to descents of $W$. 
Thus there are at most $t/2$ such indices, and we see that the number of possible pairs $(\gamma, \delta)$ for a given $W$ is at most $(2d)^{t/2} \cdot 2^{t/2} = (4d)^{t/2} \leq (2\Delta)^{t/2}$. 

Finally, the number of partial colourings $c_{AVD}$ of $G$ is at most $|E(G)|^{\Delta+m+1}$, and is in particular independent of $t$. 

Assuming that $\Delta$ is large enough so that $s = \lceil 2 \epsilon \Delta \rceil > (32\Delta)^{1/2}$, we conclude that 
\[
|\mathcal{L}_t| \leq 2 \cdot 4^t \cdot (2\Delta)^{t/2} \cdot |E(G)|^{\Delta+m+1} = O\left((32\Delta)^{t/2}\right)=o(s^t), 
\]
as desired. 
\end{proof}

\section*{Acknowledgements}

We thank Marthe Bonamy for inspiring discussions and Jakub Przyby\l{}o for his careful reading and helpful comments on a previous version of the paper. We are much grateful to an anonymous referee for her/his insightful comments, which greatly helped us improve the paper. 

\bibliographystyle{abbrv}
\bibliography{bibliography}

\end{document}